\newtheorem{theorem}{Theorem}[section]
\newtheorem{proposition}[theorem]{Proposition}
\newtheorem{lemma}[theorem]{Lemma}
\newtheorem{corollary}[theorem]{Corollary}
\theoremstyle{definition}
\newtheorem{definition}[theorem]{Definition}
\numberwithin{equation}{section}
\newtheorem{XxmpX}[theorem]{Example} 
\newenvironment{example}    
  {%
   \pushQED{\qed}\begin{XxmpX}}
  {\popQED\end{XxmpX}}
\newtheorem{XxmpY}[theorem]{Remark} 
\newenvironment{remark}    
  {%
   \pushQED{\qed}\begin{XxmpY}}
  {\popQED\end{XxmpY}}
\begin{document}
\title[Splittable Jordan homomorphisms  and commutator ideals]{Splittable Jordan homomorphisms  and commutator ideals}

\author{Matej Brešar} 

\address{Faculty of Mathematics and Physics, University of Ljubljana \&
Faculty of Natural Sciences and Mathematics, University of Maribor \& IMFM, Ljubljana, Slovenia}
\email{matej.bresar@fmf.uni-lj.si}

\thanks{Partially supported by ARIS Grant P1-0288}

\keywords{Splittable Jordan homomorphism,  sum of a homomorphism and an antihomomorphism,   
commutator ideal,  semiprime ring, reduced ring.}

\subjclass[2020]{16W10, 16W20, 16N60, 17C50}

\begin{abstract} 
We define a Jordan homomorphism $\varphi$ from a ring $R$ to a ring $R'$ to be splittable if the ideal (of the subring generated by the image of $\varphi$) generated by all $\varphi(xy)-\varphi(x)\varphi(y)$, $x,y\in R$, has trivial intersection with the ideal generated by 
 all $\varphi(xy)-\varphi(y)\varphi(x)$, $x,y\in R$.
Our main result states that a splittable Jordan homomorphism is the sum of a homomorphism and an antihomomorphism on the commutator ideal. As  applications, we obtain  results  that   give  new insight into the question of the structure of Jordan homomorphisms on some classes of rings.
\end{abstract}

\maketitle

\section{Introduction}

Let $R$ and $R'$ be associative rings. 
An additive map
$\varphi:R\to R'$ is called a {\em Jordan homomorphism} if
\begin{equation}
\label{j1}\varphi(x^2)=\varphi(x)^2\quad\mbox{and}\quad \varphi(xyx)=\varphi(x)\varphi(y)\varphi(x)
\end{equation}
for all $x,y\in R$. Writing $x\circ y$ for the Jordan product $xy+yx$ and 
linearizing the first equation, we see that $\varphi$ satisfies
\begin{equation} \label{j2}
    \varphi(x\circ y)=\varphi(x)\circ \varphi(y)
\end{equation}
for all $x,y\in R$. If $R'$ is $2$-torsion free (that is,  $2x= 0$ implies $x= 0$ in $R'$), then \eqref{j2} is actually equivalent to \eqref{j1}. This follows from
 $2x^2 =x\circ x$ and $2xyx = x\circ (x\circ y) - x^2 \circ y$.

A standard problem is to describe Jordan homomorphisms through their obvious examples, namely homomorphisms and antihomomorphisms.  This problem has a long history. Some of the early, classical results include Hua's theorem stating that a Jordan automorphism of a division ring is an automorphism or an antiautomorphism \cite{Hua},  Jacobson--Rickart's theorem   stating that a Jordan homomorphism of a  matrix ring is the sum of a homomorphism and an antihomomorphism \cite{JR}, Kadison's theorem stating that 
a Jordan $*$-homomorphism from a von Neumann algebra onto a C$^*$-algebra is  the direct sum of a $*$-homomorphism and a $*$-antihomomorphism \cite{K},
and Herstein's theorem stating that a Jordan homomorphism onto a prime ring (of characteristic not $2$) is  a homomorphism or an antihomomorphism \cite{Her}. These old results and their  generalizations are still relevant in the modern literature, primarily because Jordan homomorphisms naturally occur in various mathematical problems. In particular, the  solution of many, if not most of,  preserver problems is that the map in question is tightly connected to a (often  surjective) Jordan homomorphism. We list  a few  recent publications \cite{AEGV, BM, zpdbook, BFI, CFC, FV, F, GK, J, Mol, Sch} treating topics across mathematics,  in which Jordan homomorphisms  play a role.

A natural problem  is to find an appropriate generalization of the aforementioned Herstein's theorem on prime rings (that is, rings in which the product of two nonzero ideals is nonzero) to semiprime rings  (that is, rings without nonzero nilpotent ideals). This problem was first considered by Baxter and Martindale in \cite{BaxM}
and continued by the present author in \cite{B1}.
Since the direct product of two semiprime rings is again semiprime (but not prime), it is immediate that the  conclusion of  Herstein's theorem does not hold.  Moreover, an example in \cite{BaxM}, which  the authors attributed to Kaplansky, shows that not every Jordan homomorphism onto a semiprime ring is the sum of a homomorphism and an antihomomorphism. Generalizing the result of \cite{BaxM}, it was shown in 
\cite{B1} that a Jordan homomorphism $\varphi$ from a
ring $R$ onto a $2$-torsion free semiprime ring $R'$ is a  direct sum (in the sense of Definition \ref{d2} below) of a homomorphism and an antihomomorphism  from an essential ideal $I$ of $R$ onto an essential ideal $I'$ of $R'$. This result is in some sense optimal, but somewhat inconvenient for applications since  $I$ and $I'$ are  abstract,  unspecified essential ideals. 

Speaking philosophically, to understand a Jordan homomorphism $\varphi$, we must  understand its action on the Lie product $[x,y]=xy-yx$. This is simply because $2xy= x\circ y + [x,y]$ and we know how $\varphi$ acts on $x\circ y$ by the very definition. A natural ideal to study is thus the {\em commutator ideal} $K$ of $R$, that is, the ideal generated by all $[x,y]$, $x,y\in R$. Such a study was already  carried out  by the author in  
 \cite{B2}, under the assumption that $\varphi$ preserves tetrads
 $\{x_1,x_2,x_3,x_4\}=x_1x_2x_3x_4+x_4x_3x_2x_1$. In the present paper, we will avoid this assumption. The overlap between the two  papers is therefore  small.

The main idea of this paper is to introduce  the following notion. Given a Jordan homomorphism $\varphi:R\to R'$ and  denoting the subring of $R'$ generated by the image of $\varphi$ by $R_\varphi'$,  we say that $\varphi$
  is  {\em splittable} if the ideal of  $R_\varphi'$ generated by  $\{\varphi(xy)-\varphi(x)\varphi(y)\,|\, x,y\in R\}$ has trivial intersection with the ideal of 
  $R_\varphi'$ generated by 
 $\{\varphi(xy)-\varphi(y)\varphi(x)\,|\, x,y\in R\}$.
We provide several examples and show that such a Jordan homomorphism is the sum of a homomorphism and an antihomomorphism on the commutator ideal. This result is applied to the study of surjective Jordan homomorphisms onto semiprime rings as well as 
general Jordan homomorphisms to reduced rings 
(that is, rings without nonzero nilpotent elements).  We will in particular consider generalizations and connections with the classical results mentioned in the second  
paragraph,  thereby showing that the concept of a splittable Jordan homomorphism enables a unified approach to seemingly different problems.

The paper is organized as follows. In Section \ref{s2}, we introduce the notions of the sum and the  direct sum of a homomorphism and an antihomomorphism on an ideal,
and 
give two examples  that shed light on them.
 The central part of the paper is 
Section \ref{s4} where we define and study splittable Jordan homomorphisms.
In Section \ref{s5}, we consider
a Jordan homomorphism $\varphi$ that is the  sum of a homomorphism and an antihomomorphism on the commutator ideal $K$.  The question that we study is where   does $\varphi$ send elements from $K$.

\section{Sums and direct sums of homomorphisms and antihomomorphisms} \label{s2}

Throughout the paper,
unless stated otherwise, $R$ and $R'$ are assumed to be arbitrary associative  rings (possibly non-unital).
If $\varphi_1:R\to R'$ is a homomorphism 
and  $\varphi_2:R\to R'$ is an antihomomorphism, then 
$$\varphi=\varphi_1+\varphi_2$$
is a Jordan homomorphism, provided that
\begin{equation}\label{pp}
    \varphi_1(R) \varphi_2(R)=\varphi_2(R) \varphi_1(R)=\{0\}.
\end{equation} The simplest and the most natural situation where this is fulfilled is the following.

\begin{example} \label{example1}
Let $R'=R_1'\times R_2'$, let $\alpha_1:R\to R_1'$ be a homomorphism,
and let $\alpha_2:R\to R_2'$ be an antihomomorphism.
 Then $\varphi:R\to R'$, 
$$\varphi(x) 
= (\alpha_1(x),\alpha_2(x)),$$ is a Jordan homomorphism. Note that $\varphi$ is the  sum of
the homomorphism $\varphi_1(x)=(\alpha_1(x),0)$ and 
the antihomomorphism $\varphi_2(x)=(0,\alpha_2(x))$. 
\end{example}

Observe that, in this example, $\varphi(R)$ is not  closed under multiplication if $\alpha_1$ and $\alpha_2$ are injective and $R$ is noncommutative.  When one is interested    in surjective Jordan homomorphisms,
the following sub-example is more relevant.

\begin{example} \label{example2}
Let $R=R_1\times R_2$ and $R'=R_1'\times R_2'$, let $\beta_1:R_1\to R_1'$ be a homomorphism,
and let $\beta_2:R_2\to R_2'$ be an antihomomorphism.
 Then the Jordan homomorphism $\varphi:R\to R'$, 
$$\varphi(x_1,x_2)  
= (\beta_1(x_1),\beta_2(x_2)),$$ is the sum of
the homomorphism $\varphi_1(x_1,x_2)=(\beta_1(x_1),0)$ and 
the antihomomorphism $\varphi_2(x_1,x_2)=(0,\beta_2(x_2))$. 
\end{example}

It is natural to call $\varphi$ from this example the {\em direct sum} of a homomorphism  and an antihomomorphism. Observe that 
$\varphi(R)= (\beta_1(R_1),\beta_2(R_2))$
is a subring of $R'$. Moreover, if $\beta_1$ and $\beta_2$ are surjective, then so is $\varphi$.

As we mentioned in the introduction, and will also show in Examples \ref{mexa0} and \ref{mexa} below,  one can construct Jordan homomorphisms of relatively simple rings that are not sums of  homomorphisms and  antihomomorphisms. We will therefore consider their restrictions to the commutator ideal. The next definition, however, concerns
an arbitrary ideal. First, we need a notation: 
by $R_\varphi '$
we denote the subring 
of $R'$ generated by the image of the Jordan homomorphism $\varphi:R\to R'$.

\begin{definition}\label{d1}
 A  Jordan homomorphism   $\varphi:R\to R'$  is the 
   {\em sum of a homomorphism and an antihomomorphism on the ideal $I$} of $R$
 if there exist a homomorphism
    $\varphi_1:I\to R_\varphi'$ and an antihomomorphism  $\varphi_2:I\to R_\varphi'$  such that:
    \begin{enumerate}
        \item[(a)] $\left.\varphi\right|_{I}=\varphi_1+\varphi_2$,
        \item[(b)]  $R_\varphi '$ has  ideals $J_1$ and $J_2$  such that $\varphi_1(I)\subseteq J_1$, 
        $\varphi_2(I)\subseteq J_2$, 
 and    $J_1\cap J_2=\{0\}$,
        \item[(c)] $\varphi_1(ux)=\varphi_1(u)\varphi(x)$  and  $\varphi_1(xu)=\varphi(x)\varphi_1(u)$ 
 for all  
        $u\in I$ and $x\in R$,
        \item[(d)] $\varphi_2(ux)=\varphi(x)\varphi_2(u)$  and $\varphi_2(xu)=\varphi_2(u)\varphi(x)$ 
 for all  
        $u\in I$ and $x\in R$.
      \end{enumerate}  
\end{definition}

\begin{remark} \label{sar}     In the setup of Definition \ref{d1}, the following also hold:\begin{enumerate}
        \item[1.] 
If $\varphi$ is surjective, then it follows from (c) and (d) that 
$\varphi_1(I)$ and $\varphi_2(I)$
    are ideals of $R_\varphi '=R'$ (so we may take $J_1=\varphi_1(I)$ and $J_2=\varphi_2(I)$).
    \item[2.] In the basic case where $I=R$, (c) and (d) follow from other conditions (indeed,
    write $\varphi(x)=\varphi_1(x) + \varphi_2(x)$ and use \eqref{pp}). Definition \ref{d1} is therefore, in this case, just an interpretation of  Example \ref{example1}.
  \qedhere  \end{enumerate}
    \end{remark}

 The next definition corresponds  to  Example \ref{example2}.

\begin{definition}\label{d2}   A  Jordan homomorphism   $\varphi:R\to R'$  is the 
   {\em direct sum of a homomorphism and an antihomomorphism from the ideal $I$ of $R$ onto the ideal $J$ of $R'_\varphi$} if there exist 
   ideals $I_1$ and $I_2$ of $R$  such that:
    \begin{enumerate}
    \item[(a)] $I=I_1 + I_2$ and $I_1\cap I_2 = \ker \varphi\cap I$,
         \item[(b)] $\varphi(I_1)$ and $\varphi(I_2)$ are ideals of $R_\varphi '$ and
        $J=\varphi(I_1)\oplus \varphi(I_2)$,
        \item[(c)] $\varphi(u_1x)=\varphi(u_1)\varphi(x)$  for all 
        $u_1\in I_1$ and $x\in R$,
        \item[(d)]  $\varphi(u_2x)=\varphi(x)\varphi(u_2)$  for all 
        $u_2\in I_2$ and $x\in R$.
    \end{enumerate}
    \end{definition}


    \begin{remark}\label{re5}
        In the setup of Definition \ref{d2}, the following also hold:
        \begin{enumerate}
        \item[1.]  $\left.\varphi\right|_{I_1}$ is a homomorphism and $\left.\varphi\right|_{I_2}$ is an antihomomorphism.
      \item[2.] For all $u_1\in I_1$, $u_2\in I_2$, and $x\in R$,   \begin{equation*}\label{abc} \varphi(xu_1)=\varphi(x)\varphi(u_1) \quad\mbox{and}\quad \varphi(xu_2)=\varphi(u_2)\varphi(x)
      \end{equation*}
     (since $\varphi$ is a Jordan homomorphism).     
      \item[3.] $\varphi(I)=\varphi(I_1)\oplus \varphi(I_2)=J$.  Note also that if $\varphi$ is surjective, then the condition that $\varphi(I_1)$ and $\varphi(I_2)$
      are ideals follows from (c), (d), and the preceding remark.

     \item[4.] 
     $\varphi$ is the sum  of a homomorphism 
and an antihomomorphism  on $I$. They are defined as follows: $$\varphi_1(u_1+u_2)=\varphi(u_1)\quad\mbox{and}\quad \varphi_2(u_1+u_2)=\varphi(u_2)$$ for all $u_1\in I_1$ and $u_2\in I_2$. Indeed, observe that
$\varphi_1$ and $\varphi_2$ are well-defined,
$\varphi_1$ is a homomorphism,
$\varphi_2$ is an antihomomorphism, and all 
conditions of Definition \ref{d1} are fulfilled.
We also remark that $\left.\varphi_1\right|_{I_1}=\left.\varphi\right|_{I_1} $ and $\varphi_1(I_2)=\{0\}$,
$\left.\varphi_2\right|_{I_2}=\left.\varphi\right|_{I_2} $ and $\varphi_2(I_1)=\{0\}$, and $\varphi_1(I) =\varphi(I_1)$ and
        $\varphi_2(I) =\varphi(I_2)$.
      \qedhere  \end{enumerate}
    \end{remark}

We now see that
Definition \ref{d1} covers Definition \ref{d2},  that is, 
a "direct sum" is indeed a "sum". When is the converse true? We  answer this question for surjective Jordan homomorphisms in the following  lemma,  a variation of \cite[Lemma 2.1]{B2}.

\begin{lemma}\label{ll}
 Suppose a  surjective Jordan homomorphism $\varphi:R\to R'$
 is the sum of a homomorphism $\varphi_1$ and an antihomomorphism $\varphi_2$ on the ideal
    $I$. If  $\varphi_1(I)\subseteq \varphi(I)$, then $\varphi$ is the direct sum of a homomorphism and an antihomomorphism from the ideal $I$ onto the ideal $\varphi(I)$.
\end{lemma}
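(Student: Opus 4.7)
The strategy is to construct the ideals $I_1, I_2 \subseteq R$ required by Definition \ref{d2} as the intersections of $I$ with the kernels of $\varphi_2$ and $\varphi_1$, respectively. Since $\varphi$ is surjective, Remark \ref{sar}(1) allows us to take $J_1 = \varphi_1(I)$ and $J_2 = \varphi_2(I)$; these are ideals of $R' = R_\varphi'$ with $J_1 \cap J_2 = \{0\}$.

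I would first set $I_1 = \{u \in I : \varphi_2(u) = 0\}$ and $I_2 = \{u \in I : \varphi_1(u) = 0\}$. Because $I$ is an ideal of $R$ and, by Definition \ref{d1}(d), $\varphi_2(ux) = \varphi(x)\varphi_2(u)$ and $\varphi_2(xu) = \varphi_2(u)\varphi(x)$, the set $I_1$ is closed under left and right multiplication by elements of $R$; $I_2$ is an ideal by the symmetric argument using (c). Moreover, since $\varphi|_I = \varphi_1 + \varphi_2$ and $J_1 \cap J_2 = \{0\}$, an element $u \in I$ satisfies $\varphi(u) = 0$ iff $\varphi_1(u) = \varphi_2(u) = 0$, so $I_1 \cap I_2 = \ker \varphi \cap I$, giving condition (a) of Definition \ref{d2} once the sum decomposition is in hand.

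The crux, and the main obstacle, is showing $I = I_1 + I_2$, and this is precisely where the hypothesis $\varphi_1(I) \subseteq \varphi(I)$ enters. Given $u \in I$, the hypothesis supplies some $v \in I$ with $\varphi(v) = \varphi_1(u)$; decomposing $\varphi(v) = \varphi_1(v) + \varphi_2(v)$ yields $\varphi_2(v) = \varphi_1(u) - \varphi_1(v) \in J_1 \cap J_2 = \{0\}$, so $v \in I_1$ and $\varphi_1(v) = \varphi_1(u)$. Consequently $u - v \in I_2$ and $u = v + (u - v)$ is the desired decomposition. This same argument also shows that $\varphi(I_1) = \varphi_1(I_1) = J_1$ (and symmetrically $\varphi(I_2) = J_2$), so $\varphi(I_1)$, $\varphi(I_2)$ are ideals with $\varphi(I) = \varphi(I_1) \oplus \varphi(I_2)$, verifying (b). Conditions (c) and (d) of Definition \ref{d2} then follow immediately from their Definition \ref{d1} counterparts once one notes that on $I_1$ one has $\varphi = \varphi_1$ (since $\varphi_2$ vanishes there) and on $I_2$ one has $\varphi = \varphi_2$. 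Everything else is routine; the hypothesis is used solely to lift $\varphi_1(u)$ back into $I$ in order to produce the $I_1$-component of $u$.
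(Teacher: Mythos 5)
Your proposal is correct and follows essentially the same route as the paper: you take $I_1=\ker\varphi_2$ and $I_2=\ker\varphi_1$, use the hypothesis $\varphi_1(I)\subseteq\varphi(I)$ to produce $v\in I$ with $\varphi(v)=\varphi_1(u)$, and deduce from $\varphi_1(u-v)=\varphi_2(v)\in J_1\cap J_2=\{0\}$ that $u=v+(u-v)\in I_1+I_2$. The remaining verifications match the paper's (which it likewise leaves as routine), so there is nothing to add.
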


\begin{proof} 
    Let $I_1=\ker\varphi_2$ and $I_2=\ker\varphi_1$. Note that $I_1$ and $I_2$ are ideals of $R$ (not only  of $I$).
    Take $u\in I$. Then $\varphi_1(u)\in \varphi(I)$ by our assumption, so 
    there is a $v\in I$ such that $\varphi_1(u)=\varphi(v)$. On the other hand, $\varphi(v)=\varphi_1(v) + \varphi_2(v)$. Hence,
    $\varphi_1(u-v)= \varphi_2(v)$, which implies that 
    $u-v\in I_2$ and $v\in I_1$. Thus,
$u= v + (u-v)\in I_1+I_2$, proving that
$I=I_1+I_2$. 
Clearly, $I_1\cap I_2 = \ker\varphi\cap I$.
It is straightforward to check that 
$\varphi(I_i)$ is equal to $\varphi_i(I)$, $i=1,2$, and is therefore an ideal of $R'$ (see Remark \ref{sar}), as well as that 
conditions (c) and (d)  of Definition \ref{d2} are  fulfilled.
\end{proof}

The difference between "sum" and "direct sum" is thus subtle, but important.  Lemma \ref{ll} implies that the two  coincide in the basic case where  $I=R$ and $\varphi$ is surjective.

We will now give two examples of Jordan homomorphisms that are  sums of homomorphisms and antihomomorphisms on the commutator ideal, but not on the whole ring. They were inspired by the example from \cite{BaxM} we mentioned in the introduction.

    \begin{example}\label{mexa0}
Let $R$ be a unital ring satisfying the 
 following two conditions:
\begin{enumerate}
    \item[(a)] the commutator ideal $K$ of $R$ is not contained in a proper ideal that is a direct summand (i.e., if $e\in R$ is a central idempotent such that $K\subseteq eR$, then $e=1$),
    \item[(b)] $R$ contains a proper ideal $T$ 
    and a subring $A$  containing $1$ and contained in the center $Z(R)$ 
    such that  $R=T\oplus A$  (the additive group direct sum).   
\end{enumerate}
We will show that then there exist a ring $R'$ and a Jordan homomorphism $\varphi:R\to R'$ which is  the sum of a homomorphism and an antihomomorphism on $T$, but not on the whole $R$. Since $T$ obviously contains $K$,
$\varphi$ is, in particular, the sum of a homomorphism and an antihomomorphism on $K$.

We remark that a simple  example of such a ring $R$ is the ring obtained by adjoining a unity to 
any noncommutative ring $T$ without  nonzero central idempotents (so, in particular, without unity).

We now define $R'$ and $\varphi$. Let  $\psi:R\to T$ and
$\pi:R\to A$  be the projections, i.e., 
$\psi(t+a)=t$ and $\pi(t+a)=a$   for all $a\in A$ and $t\in T$. Observe that 
$\pi$ is a homomorphism,  $\pi(1)=1$,  $\psi(1)=0$, and $x=\psi(x)+\pi(x)$ for every $x\in R$.
Write $T^{\rm o}$ for the opposite ring of $T$. Note that  $R'=R\times T^{\rm o}$ becomes a   ring under the componentwise addition and  multiplication given by
\begin{equation}\label{lab}
    (x,t)(x',t') = (xx', \pi(x)t' +\pi(x')t +tt').
\end{equation} 
Moreover, $R'$ is unital with unity  $1'=(1,0)$. Define
$\varphi:R\to R'$ by
$$\varphi(x)= (x,\psi(x)).$$
One easily checks that $\varphi$ is a Jordan homomorphism and that $\varphi$ is the sum of the homomorphism $t\mapsto (t,0)$ and the antihomomorphism $t\mapsto (0,t)$ on $T$ (the ideals from Definition \ref{d1} are  $J_1 = T\times \{0\}$ and 
$J_2 = \{0\}\times T^{\rm o}$).

Assume now that there exist a homomorphism $\varphi_1:R\to R'$ and an antihomomorphism $\varphi_2:R\to R'$
such that $\varphi$ is the sum of $\varphi_1$ and $\varphi_2$ (on the whole $R$). Write $e_1=\varphi_1(1)$
and $e_2=\varphi_2(1)$. Clearly, $e_1$ and $e_2$ are idempotents and  $e_1 + e_2 =\varphi(1)=1'$.  From \begin{equation}\label{alsoh}
    e_1\varphi_1(x)=\varphi_1(x) = \varphi_1(x)e_1\quad\mbox{and}\quad e_2\varphi_2(x)=\varphi_2(x) = \varphi_2(x)e_2
\end{equation}  we see that  $e_1$ and $e_2$ commute with every  $\varphi(x)=\varphi_1(x)+\varphi_2(x)$, $x\in R$.
    Moreover,
\begin{equation}
    \label{eena} \varphi_1(x)=e_1 \varphi(x)\quad\mbox{and}\quad\varphi_2(x)=e_2 \varphi(x).
\end{equation}
    Let $f_1\in R$ and
    $t_1\in T$ be such that $e_1=(f_1,t_1)$. From $e_1^2=e_1$ we infer that
    $f_1^2=f_1$ and \begin{equation}\label{beco}
        2\pi(f_1)t_1+t_1^2=t_1,
    \end{equation}
    and from $e_1\varphi(x)=\varphi(x)e_1$ we infer that $f_1x=xf_1$, $x\in R$. Thus, $f_1$ is a central idempotent of  $R$.
     Next, the condition that $\varphi_2$ is an antihomomorphism can be in view of  \eqref{eena}  written as
    \begin{equation}
\label{sinn}e_2\varphi(xy)=e_2\varphi(y)\varphi(x)
    \end{equation}
    for all $x,y\in R$. Since $e_2=1'-e_1= (1-f_1,-t_1)$,
    this implies $(1-f_1)[x,y]=0$
for all $x,y\in R$. That is, the commutator ideal $K$ is contained in $f_1R$, so the assumption (a) implies  $f_1=1$. Therefore, \eqref{beco} becomes
$t_1^2 = -t_1$, i.e., $-t_1$ is an idempotent. Since $e_1=(1,t_1)$ commutes with every $\varphi(x)$,
$t_1$ commutes with every element in $T$, and hence with every element in $R$. Therefore,
$-t_1$ is a central idempotent of $R$.
Choosing $x$ and $y$ in \eqref{sinn} to be from $T$, we obtain  
$(-t_1)[x,y] = 0$ for all $x,y\in T$, and hence also for all $x,y\in R$. By (a), $-t_1=0$. Therefore, $e_2=0$ and $\varphi=\varphi_1$ is a homomorphism.
However, from $\varphi(tt') = \varphi(t)\varphi(t')$ with $t,t'\in T$ it  follows that $tt'=t't$ for all $t\in T$, implying that $R$ is commutative. This, of course, contradicts (a). 
\end{example}

The next example is similar, but considers a Jordan automorphism. In particular, it shows
that Jordan automorphisms of reduced rings are not always sums of homomorphisms and antihomomorphisms (on the whole ring).

\begin{example}\label{mexa}
    Let $S$  and $T$ be noncommutative prime rings, with $S$ unital and $T$ not. 
    Assume 
    there exists a homomorphism $\pi$ from $S$ onto a commutative unital ring 
    $A$ such that $T$ is an algebra over $A$. Assume further that there exists an $A$-linear antiautomorphism $\psi$ of $T$. 
 Note that $R=S\times T$ becomes a unital semiprime ring under the componentwise addition and  multiplication given by the same rule \eqref{lab} as in the preceding example, where we now take $x$ from $S$ and $t$ from 
 $T$.
 Moreover,
    if both $S$ and $T$ are domains, then  $R$ is reduced. 

    Before proceeding, we remark that concrete examples of the situation discussed 
    can be easily found.  For instance, one can take $S$ to be a free noncommutative  algebra (with at least two generators) over a field $A$, $T$ to be the ideal of $S$ consisting of polynomials with zero constant term (so $T$ is an algebra over $S/T\cong A$), and $\psi$
    to be the involution that fixes the generators (compare \cite{BaxM}). Since $S$ and $T$ are domains, $R$ is a reduced ring.

    Observe that
    $$\varphi(s,t)=(s,\psi(t))$$
    defines a Jordan automorphism of $R$ which is neither an automorphism nor an antiautomorphism.
We claim that, moreover, $\varphi$ is not a sum of a homomorphism and an antihomomorphism. Assume, on the contrary, that there exist a homomorphism $\varphi_1:R\to R$ and      
    an antihomomorphism $\varphi_2:R\to R$
    such that  $\varphi=\varphi_1+\varphi_2$. Since
    $\varphi(1)=1$, $e_1=\varphi_1(1)$ and
    $e_2=\varphi_2(1)$ are orthogonal idempotents whose sum is $1$. Note also that they satisfy \eqref{alsoh}. 
    Since $\varphi$ is surjective, it follows  that $e_1$ and 
    $e_2$ are central idempotents. However,
    we have assumed that 
$S$ and $T$ are prime and  $T$ is not unital, from which we infer that $R$ has no 
    nontrivial central idempotents. This  leads to a contradiction  that 
    $\varphi=\varphi_1$ or $\varphi=\varphi_2$.

    Let $K_S$ (resp. $K_T$) denote the commutator ideal of $S$ (resp. $T$). 
It is easy to see that the commutator ideal of $R$
is equal to $K=K_S\times K_T$. Note that
$K_S$ lies in the kernel of $\pi$, which implies that $I_1=K_S\times \{0\}$ is an ideal of $R$. Clearly, so is $I_2=\{0\}\times K_T$, and $I_1\oplus I_2=K$. It is easy to check that  $I_1$ and $I_2$ satisfy  conditions of Definition \ref{d2}, showing that
 $\varphi$ is the direct sum of a homomorphism and an antihomomorphism from
 $K$ onto $K$.

Incidentally, if $T$ had a unity $1_T$, then $\varphi$ would be the direct sum of a homomorphism and an antihomomorphism on the whole $R$. Indeed, $e=(0,1_T)$ would  be a central idempotent  and $\varphi$ would be an automorphism of $(1-e)R = \{(s,-\pi(s)1_T)\,|\,s\in S\}$ and an antiautomorphism of $eR = \{0\}\times T$.
\end{example}

Of course, Example \ref{mexa} tells us more than  Example \ref{mexa0}. The point of the latter, however, is that the conditions on $R$ are relatively mild, showing that nonstandard Jordan homomorphisms on rings that do not coincide with their commutator ideals often occur. 

\section{Splittable Jordan homomorphisms}\label{s4}

For a Jordan homomorphism   
$\varphi:R\to R'$,
we write
      \begin{equation}\label{nota}
      \{x,y\}=\varphi(xy)-\varphi(x)\varphi(y)\quad\mbox{and}\quad \langle x,y\rangle=\varphi(xy)-\varphi(y)\varphi(x).\end{equation}
The maps $(x,y)\mapsto     \{x,y\}$ and 
 $(x,y)\mapsto     \langle x,y\rangle$ are biadditive and skew-symmetric, i.e., they satisfy
      $$\{x,y\} =     -\{y,x\}\quad\mbox{and}\quad
           \langle x,y\rangle =     -\langle y,x\rangle.$$
           A less obvious, but well-known 
           property  is
    \begin{equation}\label{jh}
      \{x,y\} \langle x,y\rangle=\langle x,y\rangle    \{x,y\} = 0\end{equation}
      for all $x,y\in R$ (see, for example, \cite[Lemma 4]{Her}).

      Recall that $R_\varphi '$ denotes the subring of $R'$ generated by $\varphi(R)$.     
By $V_\varphi$  we denote the ideal of $R_\varphi '$ generated by all 
$\{x,y\}$, $x,y\in R$, and  by $W_\varphi$  the ideal of $R_\varphi '$ generated by all   $\langle x,y\rangle$, $x,y\in R$. Clearly, $\varphi$ is a homomorphism if and only if $V_\varphi=\{0\}$,
 and $\varphi$ is an antihomomorphism if and only if $W_\varphi=\{0\}$.

\begin{definition}
    A Jordan homomorphism $\varphi:R\to R'$ is
    {\em splittable} if $V_\varphi\cap W_\varphi=\{0\}$.
\end{definition}

We will see that this notion is connected with the notion of the sum of a homomorphism and an antihomomorphism.
In one direction this is clear.

\begin{lemma}\label{easy}
    If a Jordan homomorphism $\varphi:R\to R'$  is the sum of a homomorphism and an antihomomorphism (on the whole $R$), then $\varphi$ is splittable.
\end{lemma}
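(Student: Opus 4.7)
The plan is to exploit the ideals $J_1$ and $J_2$ provided by clause (b) of Definition \ref{d1} (applied with $I=R$). These are ideals of $R_\varphi'$ containing $\varphi_1(R)$ and $\varphi_2(R)$ respectively, with $J_1\cap J_2=\{0\}$. Once I show that every generator of $V_\varphi$ lands in $J_2$ and every generator of $W_\varphi$ lands in $J_1$, the conclusion $V_\varphi\cap W_\varphi\subseteq J_1\cap J_2=\{0\}$ is automatic.

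First, I would expand $\{x,y\}=\varphi(xy)-\varphi(x)\varphi(y)$ using $\varphi=\varphi_1+\varphi_2$. Since $\varphi_1$ is a homomorphism and $\varphi_2$ is an antihomomorphism,
$$\varphi(xy)=\varphi_1(x)\varphi_1(y)+\varphi_2(y)\varphi_2(x),$$
while the orthogonality relations \eqref{pp} kill the cross terms in $\varphi(x)\varphi(y)$, giving
$$\varphi(x)\varphi(y)=\varphi_1(x)\varphi_1(y)+\varphi_2(x)\varphi_2(y).$$
Subtracting yields $\{x,y\}=\varphi_2(y)\varphi_2(x)-\varphi_2(x)\varphi_2(y)$. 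Because $\varphi_2(R)\subseteq J_2$ and $J_2$ is an ideal of $R_\varphi'$, each product lies in $J_2$, so $\{x,y\}\in J_2$. The symmetric computation gives $\langle x,y\rangle=\varphi_1(x)\varphi_1(y)-\varphi_1(y)\varphi_1(x)\in J_1$.

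Next, since $V_\varphi$ is the ideal of $R_\varphi'$ generated by the elements $\{x,y\}$ and these already lie in the ideal $J_2$, we get $V_\varphi\subseteq J_2$. Similarly $W_\varphi\subseteq J_1$. Therefore $V_\varphi\cap W_\varphi\subseteq J_1\cap J_2=\{0\}$, and $\varphi$ is splittable.

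There is no genuine obstacle. The only point that needs care is not trying to prove $A\cap B=\{0\}$ for $A=\varphi_1(R)$ and $B=\varphi_2(R)$ (which is not automatic from \eqref{pp} alone); instead one invokes the existence of the disjoint ambient ideals $J_1,J_2$ that clause (b) of Definition \ref{d1} already provides.
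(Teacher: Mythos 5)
Your proof is correct and follows essentially the same route as the paper's: expand $\{x,y\}$ and $\langle x,y\rangle$ using $\varphi=\varphi_1+\varphi_2$ and the orthogonality \eqref{pp}, observe that the generators of $V_\varphi$ land in $J_2$ and those of $W_\varphi$ in $J_1$, and conclude from $J_1\cap J_2=\{0\}$. (The paper writes the residual term as $\varphi_2([x,y])$ rather than as a commutator of $\varphi_2$-images, but this is the same computation.)
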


\begin{proof}
Using the notation from Definition \ref{d1} (for $I=R$), we have
    $$\{x,y\} = \varphi_1(xy) + \varphi_2(xy) - (\varphi_1(x)+\varphi_2(x))(\varphi_1(y)+\varphi_2(y)).$$
    Since \eqref{pp} holds, it follows that
    \begin{align*}\{x,y\} = \varphi_1(xy) + \varphi_2(xy) - \varphi_1(x)\varphi_1(y) -\varphi_2(x)\varphi_2(y) = \varphi_2([x,y]) \in J_2.\end{align*}
    Consequently, $V_\varphi\subseteq J_2$.
    Similarly we see that $W_\varphi\subseteq J_1$. Since $J_1\cap J_2=\{0\}$, we also have
    $V_\varphi\cap W_\varphi=\{0\}$.
\end{proof}

The converse of Lemma \ref{easy} does not hold in general.

\begin{example} We claim that Jordan homomorphisms from Examples \ref{mexa0} and
\ref{mexa} are splittable.
Indeed, 
the  Jordan  homomorphism  $\varphi$  from Example \ref{mexa0} satisfies $V_\varphi\subseteq  \{0\}\times T^{\rm o}$ and
$W_\varphi\subseteq  T\times \{0\}$, and hence 
$V_\varphi\cap W_\varphi=\{0\}$. Similarly,
 the  Jordan  homomorphism  $\varphi$  from Example \ref{mexa} satisfies   $V_\varphi\subseteq  \{0\}\times T$ and $W_\varphi\subseteq  S\times \{0\}$, and hence 
$V_\varphi\cap W_\varphi=\{0\}$.
\end{example}

 Our initial motivation for introducing splittable Jordan homomorphisms was \cite[Corollary 2.2]{B1}
 which states that if $R'$ is a $2$-torsion free semiprime ring, then
 every  surjective Jordan homomorphism
$\varphi:R\to R'$ satisfies
$V_\varphi  W_\varphi=\{0\}$.  This, however, is equivalent to 
$V_\varphi \cap W_\varphi=\{0\}$ (as $R'$ is semiprime and  $V_\varphi  W_\varphi=\{0\}$ implies $(V_\varphi \cap W_\varphi)^2=\{0\}$). Thus,
the following is true.

  \begin{proposition}    \cite{B1}\label{rabim} 
  If the ring $R'$ is $2$-torsion free and semiprime, then every surjective Jordan homomorphism
$\varphi:R\to R'$ is splittable.
\end{proposition}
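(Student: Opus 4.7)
The plan is to derive this directly from the cited result \cite[Corollary 2.2]{B1}, which asserts that for a surjective Jordan homomorphism $\varphi:R\to R'$ onto a $2$-torsion free semiprime ring, one has $V_\varphi W_\varphi = \{0\}$. Since $\varphi$ is surjective, the subring $R_\varphi'$ generated by $\varphi(R)$ is all of $R'$, so $V_\varphi$ and $W_\varphi$ are honest two-sided ideals of $R'$.

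Next, I would set $I = V_\varphi \cap W_\varphi$, which is clearly an ideal of $R'$. Using $I \subseteq V_\varphi$ on the left and $I \subseteq W_\varphi$ on the right, I obtain
\[
I^2 \subseteq V_\varphi W_\varphi = \{0\}.
\]
Thus $I$ is a nilpotent (in fact square-zero) ideal of the semiprime ring $R'$, and the definition of semiprimeness forces $I = \{0\}$. By the definition of a splittable Jordan homomorphism, this says exactly that $\varphi$ is splittable.

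There is essentially no obstacle at this level of the argument: the whole weight of the proof is carried by the cited identity $V_\varphi W_\varphi = \{0\}$, and the passage from it to $V_\varphi \cap W_\varphi = \{0\}$ is the one-line semiprimeness argument sketched above. (The only mild subtlety worth flagging is that one genuinely needs surjectivity, so that $R_\varphi' = R'$ and the ideals $V_\varphi, W_\varphi$ of $R_\varphi'$ inherit semiprimeness from $R'$; without surjectivity the semiprimeness of $R_\varphi'$ is not automatic, and the quick square-zero argument could fail.)
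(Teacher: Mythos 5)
Your proposal is correct and follows exactly the paper's own argument: the paper likewise invokes \cite[Corollary 2.2]{B1} to get $V_\varphi W_\varphi=\{0\}$ and then observes that $(V_\varphi\cap W_\varphi)^2\subseteq V_\varphi W_\varphi=\{0\}$ forces $V_\varphi\cap W_\varphi=\{0\}$ by semiprimeness. Your remark about surjectivity ensuring $R_\varphi'=R'$ is a sensible extra clarification but does not change the argument.
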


Since the intersection of two nonzero ideals of a prime ring is nonzero,
 Proposition \ref{rabim} covers Herstein's theorem from \cite{Her}.

Our next goal is to prove that general,  not necessarily surjective Jordan homomorphisms to reduced rings are splittable. We need a lemma first.

\begin{lemma}\label{rfg} Let $f,g:R\to R'$
be   additive maps satisfying $f(x)g(x)=g(x)f(x)=0$ for all $x\in R$. If $R'$ is a reduced ring, then  
$f(x)g(z)=g(z)f(x)=0$ for all $x,z\in R$. \end{lemma}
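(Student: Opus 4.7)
The plan is a standard linearization followed by a nilpotency argument. First, I would substitute $x\mapsto x+z$ in $f(x)g(x)=0$ and use biadditivity to cancel the two diagonal terms $f(x)g(x)$ and $f(z)g(z)$; this yields the ``skew'' relation
\[ f(x)g(z) + f(z)g(x) = 0 \quad \text{for all } x,z\in R. \]
Doing the same with the hypothesis $g(x)f(x)=0$ gives
\[ g(x)f(z) + g(z)f(x) = 0 \quad \text{for all } x,z\in R, \]
i.e.\ $g(z)f(x) = -g(x)f(z)$.

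The key computation is to evaluate $(f(x)g(z))^{2} = f(x)\bigl(g(z)f(x)\bigr)g(z)$. Using the second relation above to rewrite the middle factor, I get
\[ (f(x)g(z))^{2} = -f(x)g(x)f(z)g(z), \]
and the right-hand side vanishes because $f(x)g(x)=0$ (or $f(z)g(z)=0$) by hypothesis. Thus $f(x)g(z)$ squares to $0$ in $R'$, and since $R'$ is reduced it follows that $f(x)g(z)=0$. The companion identity $g(z)f(x)=0$ then drops out immediately, for example from
\[ (g(z)f(x))^{2} = g(z)\bigl(f(x)g(z)\bigr)f(x) = 0 \]
together with reducedness, or equivalently by running the same argument with the roles of $f$ and $g$ swapped.

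There is no real obstacle here: the only delicate point is making sure the linearization step cancels the right terms and that the rearrangement of $(f(x)g(z))^{2}$ does produce a product that contains one of the vanishing factors $f(x)g(x)$ or $f(z)g(z)$. Once that is in place, the reducedness of $R'$ finishes the argument in one line.
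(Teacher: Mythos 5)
Your proposal is correct and follows essentially the same route as the paper: linearize the diagonal hypothesis, show $(f(x)g(z))^2=0$ by inserting one of the vanishing products, and invoke reducedness twice. The only cosmetic difference is that you substitute $g(z)f(x)=-g(x)f(z)$ into the middle of the square, whereas the paper multiplies the linearized identity on the right by $f(x)g(z)$ and kills the cross term with $g(x)f(x)=0$; both computations are valid and equivalent in substance.
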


\begin{proof} 
   Linearizing $f(x)g(x)=0$ we obtain
     $f(x)g(z)+f(z)g(x)=0$ for all $x,z\in R$. Multiplying this equation from the right by $f(x)g(z)$ gives       $(f(x)g(z))^2=0$. As $R'$ is reduced,    $f(x)g(z)=0$ follows. This yields $(g(z)f(x))^2=0$ and hence $g(z)f(x)=0$. 
\end{proof}

\begin{proposition}    \label{rabimr} 
If the ring $R'$ is reduced, then every Jordan homomorphism
$\varphi:R\to R'$ is splittable.\end{proposition}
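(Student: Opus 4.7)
The plan is to deduce $V_\varphi\cap W_\varphi = \{0\}$ from the stronger statement $V_\varphi W_\varphi = \{0\}$: once the product vanishes, any $z\in V_\varphi\cap W_\varphi$ satisfies $z^2 \in V_\varphi W_\varphi = \{0\}$, and since the subring $R_\varphi'$ of the reduced ring $R'$ is itself reduced, $z = 0$ follows.

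To obtain $V_\varphi W_\varphi = \{0\}$, I would start from the identity \eqref{jh}, $\{x,y\}\langle x,y\rangle = \langle x,y\rangle\{x,y\} = 0$, and apply Lemma~\ref{rfg} twice. First, fixing $y$ and letting $f(x)=\{x,y\}$, $g(x)=\langle x,y\rangle$, the lemma gives $\{x,y\}\langle z,y\rangle = 0$ (and symmetrically) for all $x,y,z\in R$. Fixing $x$ and $z$ and applying the lemma again with $f(y)=\{x,y\}$, $g(y)=\langle z,y\rangle$ produces
\[
\{x,y\}\langle z,w\rangle = 0 \quad\text{and}\quad \langle z,w\rangle\{x,y\} = 0
\]
for all $x,y,z,w \in R$.

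The main obstacle is then to promote this annihilation of generators to the ideal-level statement $V_\varphi W_\varphi = \{0\}$; concretely, one needs $\{x,y\}\, e\, \langle u,v\rangle = 0$ for every $e \in R_\varphi'$, since a product of typical elements $\alpha\{x,y\}\beta$ and $\gamma\langle u,v\rangle\delta$ of $V_\varphi$ and $W_\varphi$ carries a sandwiched factor $\beta\gamma\in R_\varphi'$. Because $R_\varphi'$ is additively generated by products $\varphi(z_1)\cdots\varphi(z_n)$, $n\ge 1$, I would proceed by induction on $n$. For the inductive step I would use the identity
\[
\{x,y\}\varphi(z_1) = \varphi(x)\{y,z_1\} + \{x,yz_1\} - \{xy,z_1\},
\]
obtained by expanding $\varphi(xyz_1)$ in two ways; substituting it into $\{x,y\}\varphi(z_1)\varphi(z_2)\cdots\varphi(z_n)\langle u,v\rangle$ rewrites this expression as a sum of three pieces, each of the form $\varphi(x)\{\cdot,\cdot\}\varphi(z_2)\cdots\varphi(z_n)\langle u,v\rangle$ or $\{\cdot,\cdot\}\varphi(z_2)\cdots\varphi(z_n)\langle u,v\rangle$, and the inductive hypothesis (with $n-1$ middle factors) makes each of them vanish. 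Once $\{x,y\}\, e\, \langle u,v\rangle = 0$ for all $e\in R_\varphi'$ is in hand, the verification of $V_\varphi W_\varphi = \{0\}$ on typical generators is immediate, and the proof is complete.
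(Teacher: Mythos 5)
Your proof is correct, and its skeleton matches the paper's: apply Lemma~\ref{rfg} twice to get $\{x,y\}\langle z,w\rangle=\langle z,w\rangle\{x,y\}=0$ for all $x,y,z,w$, deduce $V_\varphi W_\varphi=\{0\}$, and then pass from $V_\varphi W_\varphi=\{0\}$ to $V_\varphi\cap W_\varphi=\{0\}$ by squaring and using reducedness. The one place you diverge is the step you rightly identify as the main obstacle, namely killing the sandwiched factor in $\{x,y\}\,e\,\langle z,w\rangle$. Your solution --- the cocycle-type identity $\{x,y\}\varphi(z_1)=\varphi(x)\{y,z_1\}+\{x,yz_1\}-\{xy,z_1\}$ together with induction on the length of words $\varphi(z_1)\cdots\varphi(z_n)$ --- is valid (the identity is a formal computation, and each term after substitution has one fewer middle factor), and it has the minor virtue of not invoking reducedness at that step. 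The paper instead disposes of this in one line: since $\langle z,w\rangle\{x,y\}=0$, the element $\{x,y\}\,t\,\langle z,w\rangle$ has square zero for \emph{any} $t\in R'$, hence is zero because $R'$ is reduced. That trick is shorter and works for arbitrary $t\in R'$ rather than only $t\in R_\varphi'$, though for the definition of $V_\varphi$ and $W_\varphi$ as ideals of $R_\varphi'$ your weaker version suffices.
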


\begin{proof}  Fixing $y$ in \eqref{jh}, we obtain from Lemma \ref{rfg} that 
  \begin{equation}\label{once}
      \{x,y\} \langle z,y\rangle=\langle z,y\rangle    \{x,y\} = 0
  \end{equation} 
    for all $x,y,z\in R$. Now, fixing $x$ and $z$ in \eqref{once} and using Lemma \ref{rfg} again, we obtain     $$\{x,y\} \langle z,w\rangle=\langle z,w\rangle    \{x,y\} = 0$$
    for all $x,y,z,w\in R$. Hence, 
    the element $\{x,y\} t\langle z,w\rangle$
    has  square zero for any $t\in R'$, and is therefore equal to $0$ as $R'$ is reduced. This implies that $V_\varphi W_\varphi=\{0\}$. Since $R'$ is reduced,   $V_\varphi \cap W_\varphi=\{0\}$ follows.
\end{proof}

This proposition generalizes Hua's theorem on Jordan automorphisms of division rings \cite{Hua}, as well as the Jacobson--Rickart generalization   \cite[Theorem 2]{JR} which states that a Jordan homomorphism from $R$ to a domain $R'$ is either a homomorphism or an antihomomorphism. This is because a subring of a domain cannot contain nonzero ideals with trivial intersection. 
 
Unlike in  Propositions \ref{rabim} and \ref{rabimr},  in the next two propositions we will impose  conditions  on $R$ rather than on $R'$. 

By $M_n(S)$ we denote the ring 
of $n\times n$ matrices over 
the ring $S$. The following result
was proved by Jacobson and Rickart
\cite[Theorem 7]{JR}. We state it in a way that 
 includes  Lemma \ref{easy}.

 \begin{proposition} \cite{JR}   \label{rabimjr} 
If $R=M_n(S)$ where $n\ge 2$ and $S$ is a unital ring,   then every Jordan homomorphism
$\varphi:R\to R'$ is the sum of a homomorphism and an antihomomorphism (on the whole $R$). In particular, $\varphi$ is splittable.\end{proposition}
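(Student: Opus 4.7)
The assertion that $\varphi$ is the sum of a homomorphism and an antihomomorphism on the whole $R$ is precisely the classical Jacobson--Rickart decomposition theorem \cite[Theorem 7]{JR}, so the plan for the first half reduces to matching hypotheses and invoking this reference. The ``in particular, $\varphi$ is splittable'' clause then follows immediately from Lemma \ref{easy}.

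If one prefers a self-contained argument, the strategy is Jacobson and Rickart's matrix-unit approach. Let $e_{ij}$, $1 \le i,j \le n$, be the standard matrix units of $R = M_n(S)$, with $\sum_i e_{ii} = 1$, and set $f_{ij} = \varphi(e_{ij})$ and $u = \varphi(1) = \sum_i f_{ii}$. The element $u$ is an idempotent of $R_\varphi'$ that acts as a two-sided identity on $R_\varphi'$. The first step is to harvest Jordan-theoretic identities from $\varphi(x^2) = \varphi(x)^2$ and $\varphi(xyx) = \varphi(x)\varphi(y)\varphi(x)$ evaluated on matrix-unit inputs: for $i \ne j$ one obtains $f_{ii}^2 = f_{ii}$, $f_{ii} f_{jj} f_{ii} = 0$, $f_{ij}^2 = 0$, $f_{ij} f_{ji} f_{ij} = f_{ij}$, and $f_{ij} f_{ji} + f_{ji} f_{ij} = f_{ii} + f_{jj}$, together with analogous relations arising from linearized inputs such as $(e_{ij} + e_{jk})^2$. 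From these one extracts an idempotent $p \in R_\varphi'$, built from $f_{12} f_{21}$ propagated across all indices via conjugation by matrix units, which is central in $R_\varphi'$ and splits $u$ as $p + (u - p)$. Setting $\varphi_1 = p\varphi$ and $\varphi_2 = (u - p)\varphi$ then yields, after expanding a general $x \in R$ as $x = \sum_{i,j} e_{ii} x e_{jj}$ and applying the Jordan identities through, that $\varphi_1$ is a homomorphism, $\varphi_2$ is an antihomomorphism, and $\varphi = \varphi_1 + \varphi_2$.

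The main obstacle is the technical bookkeeping: verifying that $p$ is genuinely central in $R_\varphi'$ and that the multiplicative and anti-multiplicative behaviors of $p\varphi$ and $(u - p)\varphi$, initially forced only on the matrix-unit generators, propagate to arbitrary elements of $R$. This propagation is the substance of the Jacobson--Rickart argument, which is why the statement is typically invoked by citation rather than re-derived. Once the decomposition is in hand, the ``in particular'' clause is automatic from Lemma \ref{easy}.
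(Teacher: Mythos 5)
Your proposal matches the paper exactly: the paper offers no independent proof of this proposition, simply citing \cite[Theorem 7]{JR} for the decomposition and noting that the statement is phrased so that the splittability clause follows from Lemma \ref{easy}. Your additional sketch of the Jacobson--Rickart matrix-unit argument is a reasonable outline of the cited proof but is not part of the paper's argument, which treats the result as a black box.
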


The next proposition considers unital  rings in which the commutator of two elements is equal to $1$. The most well-known examples are Weyl algebras. It is easy to construct other examples.
For instance, if $A$ is any algebra having such a pair of elements, then $A\otimes B$, where $B$ is any unital algebra, has them too. The class of rings having such a pair of elements is closed under direct products and homomorphic images.

In the proof, we will use the standard fact that every 
Jordan homomorphism $\varphi:R\to R'$ satisfies
 \begin{equation}
     \label{ch} \varphi([[x,y],z])=
     [[\varphi(x),\varphi(y)],\varphi(z)]
 \end{equation}  
for all $x,y,z\in R$. This follows from  $x\circ (z\circ y)- (x\circ z)\circ y = [[x,y],z]$. 

 \begin{proposition}\label{rabimjr2} 
If a $2$-torsion free unital ring $R$ contains elements $a$ and $b$ such that
 $[a,b]=1$, then every Jordan homomorphism
$\varphi:R\to R'$ is the sum of a homomorphism and an antihomomorphism (on the whole $R$). In particular, $\varphi$ is splittable.\end{proposition}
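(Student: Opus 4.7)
The plan is to construct central orthogonal idempotents $e_1,e_2\in R_\varphi'$ whose Peirce projections split $\varphi$ into a homomorphism and an antihomomorphism. Standard manipulations ($\varphi(1)^2=\varphi(1)$, $\varphi(1)\circ\varphi(x)=2\varphi(x)$, and $\varphi(1)\varphi(x)\varphi(1)=\varphi(x)$) first show that $\varphi(1)$ is the two-sided unity of $R_\varphi'$; here the $2$-torsion freeness of $R$ enters (via $R$ unital) to kill the $2$ appearing in $2y=2y$ arising from these Jordan relations applied to $f=\varphi(1)$.

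Now set $e_1=\{a,b\}$ and $e_2=\langle a,b\rangle$. The identity $\{x,y\}+\langle x,y\rangle=\varphi([x,y])$ (obtained from $\varphi(x\circ y)=\varphi(x)\circ\varphi(y)$), combined with $[a,b]=1$, gives $e_1+e_2=\varphi(1)=1_{R_\varphi'}$. Together with \eqref{jh} (so $e_1e_2=e_2e_1=0$), multiplying $e_1+e_2=\varphi(1)$ by $e_1$ forces $e_1^2=e_1$, and similarly $e_2^2=e_2$; thus $e_1,e_2$ are orthogonal idempotents summing to $1_{R_\varphi'}$. Next, from $\varphi(x)\varphi(y)=\varphi(xy)-\{x,y\}$ and $\varphi(y)\varphi(x)=\varphi(xy)-\langle x,y\rangle$, the element $c:=[\varphi(a),\varphi(b)]$ equals $\langle a,b\rangle-\{a,b\}=e_2-e_1$, so $c^2=e_1+e_2=1_{R_\varphi'}$. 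Applying \eqref{ch} with the inner commutator $[a,b]=1$ yields $[c,\varphi(z)]=\varphi([[a,b],z])=\varphi([1,z])=0$ for all $z\in R$; since $R_\varphi'$ is generated by $\varphi(R)$, $c$ lies in the centre of $R_\varphi'$. In particular $2e_1=1_{R_\varphi'}-c$ and $2e_2=1_{R_\varphi'}+c$ are central.

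It remains to upgrade ``$2e_i$ central'' to ``$e_i$ central'' and, in parallel, to establish the vanishing identities $\langle a,b\rangle\{x,y\}=0$ and $\{a,b\}\langle x,y\rangle=0$ for all $x,y\in R$. Granted these, one defines $\varphi_1(x):=e_2\varphi(x)$ and $\varphi_2(x):=e_1\varphi(x)$; since $e_1+e_2=1_{R_\varphi'}$, $\varphi=\varphi_1+\varphi_2$. Centrality of $e_2$ together with $e_2\{x,y\}=0$ gives $\varphi_1(xy)-\varphi_1(x)\varphi_1(y)=e_2(\varphi(xy)-\varphi(x)\varphi(y))=e_2\{x,y\}=0$, so $\varphi_1$ is a homomorphism; analogously $\varphi_2$ is an antihomomorphism. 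The images lie in the orthogonal ideals $J_1=e_2R_\varphi'$ and $J_2=e_1R_\varphi'$ of $R_\varphi'$, with $J_1\cap J_2=\{0\}$ since $e_1e_2=0$ and $e_i^2=e_i$. This fulfils Definition \ref{d1} with $I=R$, and splittability then follows from Lemma \ref{easy}.

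The main obstacle is this final technical step, where the $2$-torsion freeness of $R$ must be leveraged to clear the factors of $2$ that obstruct passing from ``$2e_i$ central'' to ``$e_i$ central'' and that likewise obstruct the two vanishing identities. The natural route is to polarise \eqref{jh} in both slots to obtain the multilinear relation $\{x,y_1\}\langle z,y_2\rangle+\{x,y_2\}\langle z,y_1\rangle+\{z,y_1\}\langle x,y_2\rangle+\{z,y_2\}\langle x,y_1\rangle=0$, specialise $(z,y_1)=(a,b)$, and exploit the central involution $c$ (with $c^2=1_{R_\varphi'}$) to peel off factors of $2$; the $2$-torsion freeness of $R$ is invoked at the key moment by pulling the offending relations back to $R$ via Jordan identities and cancelling the $2$'s there.
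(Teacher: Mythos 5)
Your skeleton is exactly the paper's: the same idempotents $e=\{a,b\}$ and $f=\langle a,b\rangle$ with $e+f=\varphi(1)=1'$ and $ef=0$, the same use of \eqref{ch} to show $[\varphi(a),\varphi(b)]=f-e$ is central, and the same final decomposition $\varphi_1(x)=f\varphi(x)$, $\varphi_2(x)=e\varphi(x)$. But the two facts on which everything hinges --- that $e$ and $f$ themselves (not just $2e$ and $2f$) are central, and that $e\langle x,y\rangle=0$ and $f\{x,y\}=0$ for \emph{all} $x,y$ --- are exactly the ones you defer to a ``final technical step'' and never carry out, and your sketch of that step would not go through as described. First, the obstruction $2[e,\varphi(x)]=0$ is an equation in $R'$; $2$-torsion freeness of $R$ gives no control over additive torsion in $R'$, and the elements $[e,\varphi(x)]$ have no reason to lie in $\varphi(R)$, so there is nothing to ``pull back to $R$'' and cancel there. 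The paper simply cancels the $2$ in $R'$ (its proof reads ``as $R'$ is $2$-torsion free''), which is the honest location of the torsion hypothesis; your instinct that something is off here is reasonable, but the proposed repair is not viable.

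Second, your route to the vanishing identities skips an essential intermediate stage, and in fact no factors of $2$ arise there at all. If you only take the fully polarized form of \eqref{jh} and set $(z,y_1)=(a,b)$, then after multiplying by the central idempotent $e$ you are left with
$$\{x,b\}\,e\langle a,y\rangle \;+\; e\langle x,y\rangle \;+\; \{a,y\}\,e\langle x,b\rangle \;=\;0,$$
and the first and third terms are not yet known to vanish. The paper first uses the \emph{single} linearization $\{x,z\}\langle x,y\rangle+\{x,y\}\langle x,z\rangle=0$ with $x=a,z=b$ (resp.\ $x=b,z=a$) to get $e\langle a,y\rangle=0$ and $e\langle b,y\rangle=0$ via orthogonality of $e$ and $f$; only then does the doubly polarized identity collapse to $e\langle x,y\rangle=0$, and symmetrically $f\{x,y\}=0$. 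With those two identities and centrality of $e$, the rest of your argument (that $\varphi_1$ is a homomorphism, $\varphi_2$ an antihomomorphism, $J_1=fR_\varphi'$, $J_2=eR_\varphi'$, $J_1\cap J_2=\{0\}$) is correct and matches the paper. As written, though, the proposal asserts rather than proves the decisive steps, so it is incomplete.
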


\begin{proof} Write
$e=\{a,b\}$, $f=\langle a,b\rangle$, and $1'=\varphi(1)$. It is easy to see that $1'$ is the unity of $R_\varphi '$ \cite[Corollary 3]{JR}.
We have 
$$e+ f = 2\varphi(ab)- \varphi(a)\circ \varphi(b) = 2\varphi(ab)- \varphi(a\circ b)=\varphi([a,b])= 1'.$$
Since $ef=0$ by \eqref{jh}, it follows that $e$ and $f$ are idempotents.   Next, from \eqref{ch} we obtain $[[\varphi(a),\varphi(b)],\varphi(x)]=0$
for every $x\in R$. That is,
$[\varphi(a)\varphi(b),\varphi(x)]= [\varphi(b)\varphi(a),\varphi(x)]$, and
hence $$[e,\varphi(x)]= [\{a,b\},\varphi(x)] = [\langle a,b\rangle,\varphi(x)]= [f,\varphi(x)].$$
 However, $f=1'-e$, so this gives
 $2[e,\varphi(x)]=0$. As $R'$ is $2$-torsion free, this means that $e$ lies in the center of $R_\varphi '$. Thus,
 $e$ and $f=1'-e$ are orthogonal central idempotents in $R_\varphi'$.

 Linearizing the first equation in
 \eqref{jh}, we obtain
 \begin{equation}
     \label{jh1}
    \{x,z\} \langle x,y\rangle+ \{x,y\} \langle x,z\rangle    =0
 \end{equation}
 for all $x,y,z\in R$.
 Further, the linearization of \eqref{jh1} gives
 \begin{equation}
     \label{jh2}
    \{x,z\} \langle w,y\rangle+
    \{w,z\} \langle x,y\rangle+  
    \{x,y\} \langle w,z\rangle +  
    \{w,y\} \langle x,z\rangle =0
 \end{equation}
 for all $x,y,z,w\in R$.
 Setting 
 $x=a$ and $z=b$ in \eqref{jh1}, we obtain 
 $e\langle a,y\rangle + \{a,y\}f=0$. Since $e$ and $f$ are orthogonal central idempotents, this gives
  $e\langle a,y\rangle =0$ for all $y\in R$. Similarly, $e\langle b,y\rangle =0$ for all $y\in R$.
 Therefore, writing $w=a$ and $z=b$ in \eqref{jh2} and multiplying the obtained equation by $e$, we obtain
 $e\langle x,y\rangle=0$ for all $x,y\in R$.
  Similarly we derive
 $f\{x,y\}=0$ for all $x,y\in R$. Consequently, $\varphi_1:R\to R_\varphi'$, $\varphi_1(x)=f\varphi(x)$, is a homomorphism, and $\varphi_2:R\to R_\varphi'$, $\varphi_2(x)=e\varphi(x)$, is an antihomomorphism.
 Clearly, $\varphi=\varphi_1+\varphi_2$ and all conditions of Definition \ref{d1} are fulfilled (for $I=R$).
\end{proof}

Corollary \ref{ts} below will show that  the additional information in the last two propositions that $\varphi$ is splittable is actually equivalent to the main statement.

The following example, a reinterpretation of \cite[Example 7.21]{zpdbook}, shows that not every 
 Jordan automorphism is splittable. 

\begin{example}\label{gras}
Let $R$ be the Grassmann algebra with two generators $a$ and $b$ over a field $F$ of characteristic not $2$.  Thus, $a$ and $b$ satisfy $a^2=b^2=ab+ba=0$ and $R$ is a $4$-dimensional $F$-algebra with basis $\{1,a,b,ab\}$.
Observe that every element in $U=$ span$\{a,b,ab\}$ has square zero. Therefore, every bijective linear map
$\varphi:R\to R$ that maps $1$ to $1$ and 
$U$ to $U$ is a Jordan automorphism. Observe that the product of any two elements from $U$ lies in $K=Fab$ (which, incidentally, is the commutator ideal of $R$). Therefore, if we choose $\varphi$ so that $\varphi(ab)\notin K$, then $\varphi$ is neither a homomorphism nor an antihomomorphism, meaning that $V_\varphi\ne \{0\}$
and $W_\varphi\ne \{0\}$. Since every nonzero ideal of $R$ contains $K$, we must have $K\subseteq V_\varphi\cap W_\varphi$. Therefore, $\varphi$ is not splittable.
\end{example}

Example \ref{gras} is very simple and  will be used   later for other purposes. There are other examples of Jordan homomorphisms that are not splittable. Note that if one of the rings $R$ and $R'$ is commutative, then  $V_\varphi=W_\varphi$ holds for every Jordan homomorphism $\varphi:R\to R'$, so $\varphi$ is splittable if and only if $\varphi$ is a homomorphism (and simultaneously an antihomomorphism). Jacobson and Rickart 
showed that there exist 
Jordan homomorphisms from
a commutative  ring $R$ to a
 (noncommutative) ring $R'$ that are not   homomorphisms, and hence are not  splittable Jordan homomorphisms \cite[Examples 1 and 2]{JR}. In the second example, $R$ is a field.

Each of nostandard Jordan homomorphisms
from  Examples \ref{mexa0}, \ref{mexa}, and  \ref{gras}, as well as from \cite[Examples 1 and 2]{JR}, is the sum  of  a homomorphism and an antihomomorphism on the commutator ideal. Let us finally give an example where this is not the case.

\begin{example}
    Again we let $R$  be a Grassmann algebra over a field $F$ of characteristic not $2$, but this time with three generators $a$, $b$, and $c$.   Thus, $a$, $b$, $c$ have square zero and pairwise anticommute, and $R$ is an $8$-dimensional $F$-algebra with basis 
    $\{1,a,b,c,ab,ac,bc,abc\}$. A straightforward verification shows that the linear map $\varphi:R\to R$ given by
    $$ \varphi(1)=1,\,\,\, \varphi(a)=bc,\,\,\,\varphi(b)=ac,\,\,\,\varphi(c)=ab,$$
       $$ \varphi(ab)=c,\,\,\, \varphi(ac)=b,\,\,\,\varphi(bc)=a,\,\,\,\varphi(abc)=abc,$$
       is a Jordan automorphism. 
       It is easy to see that 
       $V_\varphi=W_\varphi={\rm span}\{a,b,c,ab,ac,bc,abc\}$, so $\varphi$ is not splittable. 

       Observe that the commutator ideal $K$ of $R$ is equal to
       span$\{ab,ac,bc,abc\}$. Suppose
$\left.\varphi\right|_{K}=\varphi_1+\varphi_2$ for some homomorphism $\varphi_1:K\to R$ and some antihomomorphism $\varphi_2:K\to R$.
Since the product of any two elements in $K$ is $0$, we have $\varphi_i(k)\varphi_i(k')=0$
for all $k,k'\in K$, $i=1,2$. In particular,
$u_1=\varphi_1(ab)$, $u_2=\varphi_1(ac)$, $u_3=\varphi_1(bc)$ satisfy $u_iu_j=0$, $i,j=1,2,3$.
We claim that this  implies that 
 there exists an $x\in {\rm span}\{a,b,c\}$ such that
        $u_i\in {\rm span}\{x, ab,ac,bc, abc\}$, $i=1,2,3$.
     Indeed, this follows easily from the 
     observation that the product of two elements in span$\{a,b,c\}$ is $0$ if and only if they are linearly dependent, along with the
     obvious fact that elements having square zero lie in ${\rm span}\{a,b,c,ab,ac,bc,abc\}$.   
Analogously,
there exists a $y\in {\rm span}\{a,b,c\}$ such that 
each of the elements $v_1=\varphi_2(ab)$, $u_v=\varphi_2(ac)$, $v_3=\varphi_2(bc)$
lies in ${\rm span}\{y, ab,ac,bc, abc\}$. However, since $$c=\varphi(ab)=u_1+v_1,\quad  b=\varphi(ac)=u_2+v_2,\quad  a=\varphi(bc)=u_3+v_3,$$ this leads to the contradiction that $a,b,c \in {\rm span}\{x,y, ab,ac,bc, abc\}$. Therefore,
$\varphi$
 is not the sum of a homomorphism and an antihomomorphism on $K$.        
\end{example}

The above propositions and examples justify the introduction of a  splittable Jordan homomorphism. Its definition, however, is technical and does not reveal much about its structure. This will be remedied in the next theorem. We record two  remarks before stating it.

\begin{remark}\label{rli}
    Recall that an additive subgroup $A$ of $R$ is called a {\em Lie ideal} if $[A,R]\subseteq A$. 
    From $xay = x[a,y] + xya$, where
    $a \in A$ and $x,y\in R$, we see that
  the left ideal generated by $A$ is already a two-sided ideal. Thus, for example, the commutator ideal
 $K$  of $R$ is the left ideal generated by $[R,R]$,  so it consists of sums of elements of the form $x[y,z]$ and, since we are not assuming that our rings are unital, also  elements of the form $[y,z]$. Similarly, the ideal $L$ of $R$ generated by $[[R,R],R]$ consists of sums of elements of the form $w[x,[y,z]]$ and $[x,[y,z]]$.
 Similar observations of course apply for right ideals generated by Lie ideals.
\end{remark}

\begin{remark}
    Suppose a Jordan homomorphism $\varphi:R\to R'$ is the sum of a homomorphism $\varphi_1$ and an antihomomorphism $\varphi_2$ (on the whole $R$). Then 
    \begin{align*}
        \varphi_1([x,y])  =\varphi(xy) -\varphi_2(xy) - \varphi_1(yx)  =\varphi(xy) - \varphi_2(y)\varphi_2(x) - \varphi_1(y)\varphi_1(x). 
    \end{align*}
    Since $\varphi_1(R)\varphi_2(R)= \varphi_2(R)\varphi_1(R) = \{0\}$, it follows that
    \begin{equation}
        \label{hold} \varphi_1([x,y])= \langle x,y\rangle.
    \end{equation}
Similarly we see that
   \begin{equation}
        \label{hold2}   \varphi_2([x,y])= \{ x,y\}.
    \end{equation}
    These observations will not be needed in what follows, but they provide an insight into the proof of the following theorem, which is our central result.     
\end{remark}

\begin{theorem}\label{rmt}
    If a Jordan homomorphism $\varphi:R\to R'$ is  splittable, then  $\varphi$ is the sum of a homomorphism and an antihomomorphism on the commutator ideal $K$ of $R$.
\end{theorem}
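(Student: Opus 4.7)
The plan is to define the putative homomorphism $\varphi_1$ and antihomomorphism $\varphi_2$ directly on generators of $K$ and let splittability handle all the delicate well-definedness issues. As a preliminary simplification, since $V_\varphi$ and $W_\varphi$ are ideals of $R_\varphi'$, the product $V_\varphi W_\varphi$ lies in both $V_\varphi$ and $W_\varphi$ and hence in their intersection; so splittability automatically upgrades to the stronger orthogonality $V_\varphi W_\varphi = W_\varphi V_\varphi = \{0\}$, which will be used repeatedly.

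Two identities drive the construction. From \eqref{j2} one reads off $\{y,z\} + \langle y,z\rangle = \varphi([y,z])$ directly. Using the linearized Jordan identities $\varphi(a\circ b) = \varphi(a)\circ\varphi(b)$ and $\varphi(abc+cba) = \varphi(a)\varphi(b)\varphi(c) + \varphi(c)\varphi(b)\varphi(a)$, one further obtains
$$\varphi(w)\langle y,z\rangle + \{y,z\}\varphi(w) = \varphi(w[y,z]).$$
By Remark \ref{rli}, every $u\in K$ has a presentation $u = \sum_i w_i[y_i,z_i] + \sum_j [y_j',z_j']$, and on such an expression I would set
$$\varphi_1(u) := \sum_i \varphi(w_i)\langle y_i,z_i\rangle + \sum_j \langle y_j',z_j'\rangle \in W_\varphi, \quad \varphi_2(u) := \sum_i \{y_i,z_i\}\varphi(w_i) + \sum_j \{y_j',z_j'\} \in V_\varphi.$$
The two identities above give $\varphi_1(u) + \varphi_2(u) = \varphi(u)$ term by term. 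Well-definedness is then automatic: if $u=0$, then $\varphi_1(u) = -\varphi_2(u) \in V_\varphi \cap W_\varphi = \{0\}$, so $\varphi_1(u) = \varphi_2(u) = 0$.

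Conditions (a) and (b) of Definition \ref{d1} are now in hand with $J_1 = W_\varphi$ and $J_2 = V_\varphi$. For (c) and (d) the same splittability trick works cleanly. For example, $\varphi_1(ux) - \varphi_1(u)\varphi(x)$ lies in $W_\varphi$ because $W_\varphi$ is an ideal containing $\varphi_1(K)$, while substituting $\varphi_1 = \varphi - \varphi_2$ in both terms rewrites the same difference as $\{u,x\} + \varphi_2(u)\varphi(x) - \varphi_2(ux)$, which lies in $V_\varphi$; hence it vanishes. The three remaining identities of (c), (d) are strictly analogous, each reducing to "difference is simultaneously in $V_\varphi$ and in $W_\varphi$, so is zero". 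Once (c), (d) are in place, the homomorphism property of $\varphi_1$ and antihomomorphism property of $\varphi_2$ follow from the orthogonality $V_\varphi W_\varphi = W_\varphi V_\varphi = \{0\}$: for $u,v \in K$,
$$\varphi_1(uv) = \varphi_1(u)\varphi(v) = \varphi_1(u)\varphi_1(v) + \varphi_1(u)\varphi_2(v) = \varphi_1(u)\varphi_1(v),$$
and similarly $\varphi_2(uv) = \varphi_2(v)\varphi_2(u)$.

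The one step requiring genuine work is pinning down the correct formula for $\varphi_1,\varphi_2$ on a product $w[y,z]$ and verifying the compatibility identity $\varphi(w)\langle y,z\rangle + \{y,z\}\varphi(w) = \varphi(w[y,z])$ via the linearized Jordan identities; every subsequent step reduces to the single mantra that an element lying in both $V_\varphi$ and $W_\varphi$ is zero. One technical point to keep in mind is that the presentation of $u\in K$ must include the bare summands $[y_j',z_j']$ in addition to the $w_i[y_i,z_i]$ because $R$ need not be unital, but this entails no real complication.
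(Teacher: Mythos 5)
Your proposal is correct and follows essentially the same route as the paper: the same formulas $\varphi_1(w[y,z])=\varphi(w)\langle y,z\rangle$ and $\varphi_2(w[y,z])=\{y,z\}\varphi(w)$, the same well-definedness argument via $\varphi_1(u)+\varphi_2(u)=\varphi(u)$ combined with $V_\varphi\cap W_\varphi=\{0\}$, and the same use of the orthogonality $V_\varphi W_\varphi=W_\varphi V_\varphi=\{0\}$ to get the (anti)multiplicativity at the end. The only difference is that you verify conditions (c) and (d) by observing that the relevant difference lies in both $V_\varphi$ and $W_\varphi$, whereas the paper first establishes $\varphi_1([y,z]x)=\langle y,z\rangle\varphi(x)$ by an explicit computation; your shortcut is valid and slightly more economical.
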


\begin{proof}
First, a general remark. As mentioned in Remark \ref{rli}, the commutator ideal $K$ consists of sums of elements of the form $x[y,z]$ and
$[y,z]$.  However, for simplicity of exposition we will deal only with  $x[y,z]$  and ignore the commutators $[y,z]$. It should be obvious what changes need to be made to cover the general situation.

In the first step of the proof, we will show that if 
 $x_i,y_i,z_i\in  R$ are such that
\begin{equation}\label{za}
\sum_i x_i[y_i,z_i] =0,    
\end{equation}
then both
$$
 v=
\sum_i  \{y_i,z_i\} 
 \varphi (x_i)\,\,\,\mbox{and}\,\,\,
w=\sum_i \varphi(x_i)\langle y_i,z_i\rangle 
 $$
are equal to $0$.
We start by writing \eqref{za} as 
\begin{equation}
    \label{bef}\sum_i x_i\circ (y_iz_i)=
\sum_i x_iz_iy_i + y_iz_ix_i.
\end{equation}
Linearizing $\varphi(xyx)=\varphi(x)\varphi(y)\varphi(x)$, we see that $\varphi$ satisfies
 \begin{equation}
     \label{ch3} \varphi(xyz+zyx)=\varphi(x)\varphi(y)\varphi(z) + \varphi(z)\varphi(y)\varphi(x)\end{equation}
for all $x,y,z\in R$. Applying $\varphi$ to \eqref{bef}, we therefore obtain 
 \begin{align*}\sum_i \varphi(x_i)\circ \varphi(y_iz_i)= 
\sum_i\varphi(x_i)\varphi(z_i)\varphi(y_i) + \varphi(y_i)\varphi(z_i)\varphi(x_i). 
 \end{align*}
This can be read as $v=-w$. 
Note that  $v\in V_\varphi$ and $w\in W_\varphi$.
Since $ V_\varphi\cap W_\varphi=\{0\}$ by assumption, it follows that 
 $v=w=0$, as desired.

What we just proved shows that there are well-defined additive
maps $\varphi_1,\varphi_2:K\to R_\varphi '$ satisfying 
\begin{equation}\label{defi}
\varphi_1( x[y,z]) =   \varphi(x)\langle y, z\rangle \quad\mbox{and}\quad \varphi_2( x[y,z])=\{ y,z\}\varphi(x)    
\end{equation}for all $x,y,z\in R$. Our goal is to prove that 
 $\varphi_1$ and $\varphi_2$ satisfy all conditions of Definition \ref{d1}. We first observe that condition (b)  is fulfilled
 since $\varphi_1(K)\subseteq W_\varphi$, $\varphi_2(K)\subseteq V_\varphi$, and $V_\varphi\cap W_\varphi=\{0\}$.
 
In light of the convention mentioned in  the first paragraph of the proof, $\varphi_1$ satisfies \eqref{hold} and $\varphi_2$ satisfies \eqref{hold2}. 
 Let us show that
 \begin{equation} \label{bbc}\varphi_1([y,z]x)= \langle y, z\rangle\varphi(x)\quad\mbox{and}\quad \varphi_2([y,z]x)= \varphi(x)\{ y, z\}
 \end{equation}
 for all $x,y,z\in R$. 
 We have
\begin{align*}
    \varphi_1([y,z]x) = &\varphi_1([[y,z],x])+\varphi_1(x[y,z])\\ =& \langle[y,z],x\rangle + \varphi(x)\langle y, z\rangle\\
    =& \varphi([y,z]x) - \varphi(x)\varphi([y,z]) + \varphi(x)\varphi(yz)  - \varphi(x)\varphi(z)\varphi(y)\\
    =& \Bigl(\varphi(yzx+xzy) - \varphi(x)\varphi(z)\varphi(y)\Bigr) -\Bigr(\varphi(x\circ zy) - \varphi(x)\varphi(zy) \Bigr).
\end{align*}
Since  $\varphi$ is a Jordan homomorphism
 and since 
\eqref{ch3} holds, it follows that 
 \begin{align*}
    \varphi_1([y,z]x) = &\varphi(y)\varphi(z)\varphi(x)  -\varphi(zy)\varphi(x)  \\ = & -\langle z,y\rangle\varphi(x)= \langle y,z\rangle \varphi(x).
\end{align*}
We have thus proved the first equation in 
\eqref{bbc}. The proof of the second one is similar.

Next,  we  show that
 \begin{equation}
    \label{dva}
\varphi_1(kx) = \varphi_1(k) \varphi(x)\quad\mbox{and}\quad \varphi_1(xk) = \varphi(x) \varphi_1(k),
\end{equation}and
\begin{equation}
\label{tri}
\varphi_2(kx) = \varphi(x) \varphi_2(k)\quad\mbox{and}\quad  \varphi_2(xk)= \varphi_2(k) \varphi(x)
\end{equation}
for all $k\in K$ and $x\in R$. 
As $K$ is equal to the right ideal generated by $[A,A]$, we can write
 $k=\sum_i [y_i,z_i]x_i$ (so, again we are neglecting the commutators $[y_i,z_i]$ for simplicity). By \eqref{bbc},
$$\varphi_1(kx) = \sum_i \langle y_i,z_i\rangle 
 \varphi (x_ix).$$
 However, since
 $$\langle y_i,z_i\rangle \bigl(\varphi( x_ix) -\varphi( x_i)\varphi(x)\bigr)\in W_\varphi V_\varphi =\{0\}, $$
this gives
 $$\varphi_1(kx)=\sum_i  \langle y_i,z_i\rangle 
 \varphi (x_i)\varphi(x)=\varphi_1(k)\varphi(x),$$ as desired. The proof that $\varphi_1(xk) = \varphi(x) \varphi_1(k)$ is similar, just that we use \eqref{defi}  rather than \eqref{bbc}.  Thus, \eqref{dva} holds. The proof of \eqref{tri} is analogous.

We claim that
\begin{equation}\label{f12}
\left.\varphi\right|_{K}=\varphi_1+
\varphi_2. 
\end{equation}
Indeed, take $k=\sum_i x_i[y_i,z_i]\in K$. Using \eqref{ch3}, we  have
\begin{align*}
    \varphi_1(k)+\varphi_2(k)=& 
   \sum_i  \varphi(x_i)\langle y_i,z_i\rangle 
+ \{ y_i,z_i\} 
 \varphi (x_i)
    \\=&\sum_i \varphi(x_i)\circ \varphi(y_iz_i) 
   -\sum_i \bigl(\varphi(x_i)\varphi(z_i)\varphi(y_i) + \varphi(y_i)\varphi(z_i)\varphi(x_i)\bigr)
    \\=&\sum_i \varphi(x_i\circ y_iz_i) - \sum_i \varphi (x_iz_iy_i + y_iz_ix_i)\\
    =&\varphi\left(\sum_i x_i[y_i,z_i]\right)= \varphi(k).
\end{align*}

The only conditions from Definition \ref{d1} that remain to be verified are that
 $\varphi_1$ is a homomorphism and $\varphi_2$ is an antihomomorphism. Take $k,l\in K$. By \eqref{dva} and \eqref{f12}, 
$$\varphi_1(kl)= \varphi_1(k)\varphi(l) = \varphi_1(k)\varphi_1(l) + \varphi_1(k)\varphi_2(l).$$
However, $\varphi_1(k)\varphi_2(l)\in W_\varphi V_\varphi=\{0\}$, 
and hence $\varphi_1(kl)= \varphi_1(k)\varphi_1(l)$, i.e., $\varphi_1$ is a homomorphism. Similarly we see that $\varphi_2$ is an antihomomorphism.
\end{proof}

We remark that \cite[Theorem 4.2]{B2} gives essentially the same conclusion on a Jordan homomorphism $\varphi$ under the assumptions  that $\varphi$ preserves tetrads and $R_\varphi '$ does not contain nonzero nilpotent central ideals.

Theorem \ref{rmt} and Lemma \ref{easy} together give the following.

\begin{corollary}\label{ts}
    If the ring  $R$ is equal to  its commutator ideal, then a Jordan homomorphism $\varphi:R\to R'$ is splittable if and only if $\varphi$ is the sum of a homomorphism and an antihomomorphism (on the whole $R$).
\end{corollary}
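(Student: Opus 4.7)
The corollary is essentially a direct consequence of combining the two results that bracket it, so my plan is to record both directions briefly and make sure the hypothesis $R=K$ is used at exactly the right place.

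For the forward implication, I would assume $\varphi$ is splittable and invoke Theorem \ref{rmt} to conclude that $\varphi$ is the sum of a homomorphism and an antihomomorphism on the commutator ideal $K$ of $R$. Since the hypothesis $R=K$ identifies the commutator ideal with the whole ring, the restriction $\left.\varphi\right|_K$ coincides with $\varphi$ itself, so $\varphi$ is the sum of a homomorphism and an antihomomorphism on $R$ in the sense of Definition \ref{d1} with $I=R$.

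For the converse, I would simply apply Lemma \ref{easy}, which says that any Jordan homomorphism that is the sum of a homomorphism and an antihomomorphism on the whole $R$ is automatically splittable; no use of the hypothesis $R=K$ is needed here.

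There is essentially no obstacle to overcome, since the substantive content (the construction of $\varphi_1$ and $\varphi_2$ from the vanishing of $V_\varphi \cap W_\varphi$) is already encapsulated in Theorem \ref{rmt}. The only thing to be careful about is that Definition \ref{d1} for $I=R$ reduces, by Remark \ref{sar}(2), to the unambiguous notion of a sum of a homomorphism and an antihomomorphism on the whole ring, so no compatibility conditions are being silently dropped.
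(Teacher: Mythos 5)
Your proposal is correct and follows exactly the paper's route: the corollary is stated in the paper as an immediate consequence of Theorem \ref{rmt} (for the forward direction, using $R=K$) and Lemma \ref{easy} (for the converse). Nothing is missing.
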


It is easy to see that the ring $R=M_n(S)$, where $n\ge 2$ and $S$ is a unital ring, is equal to its commutator ideal. 
Corollary \ref{ts} therefore puts  Propositions \ref{rabimjr} and \ref{rabimjr2} in a new light.

The potential usefulness of Corollary \ref{ts} is that, generally speaking, it should be easier to prove that $ \varphi$ is splittable than to prove that it is the sum of a homomorphism and an antihomomorphism.

\begin{remark}
 If 
the commutator ideal $K$ is not equal to $R$ and 
$\varphi$ is the sum of a homomorphism and an antihomomorphism only on  $K$, there is no reason why $\varphi$  should be splittable. 
Just consider Example \ref{gras}: $\varphi$ may not be splittable, but is  a  homomorphism on  $K=Fab$.    
\end{remark}

The next corollary of Theorem \ref{rmt} follows from Proposition \ref{rabim}.

\begin{corollary}
   \label{racac} 
If the ring $R'$ is $2$-torsion free and semiprime, then every 
 surjective Jordan homomorphism $\varphi:R\to R'$ is the sum of a homomorphism and an antihomomorphism on the commutator ideal $K$ of $R$.
\end{corollary}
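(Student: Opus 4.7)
The plan is straightforward: simply combine the two principal results already established in the paper. By Proposition \ref{rabim}, the hypotheses that $R'$ is $2$-torsion free and semiprime, together with surjectivity of $\varphi$, force $\varphi$ to be splittable, i.e., $V_\varphi \cap W_\varphi = \{0\}$ in $R_\varphi' = R'$. Theorem \ref{rmt} then applies verbatim to produce a homomorphism $\varphi_1$ and an antihomomorphism $\varphi_2$ on the commutator ideal $K$ whose sum agrees with $\left.\varphi\right|_K$ and that together satisfy all conditions of Definition \ref{d1}.

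There is no genuine obstacle to overcome here; the substantive work has been carried out in the proofs of Proposition \ref{rabim} and Theorem \ref{rmt}, and the corollary is merely a formal consequence. The only minor point worth recording is that surjectivity of $\varphi$ ensures $R_\varphi' = R'$, so the subring in which the ideals $J_1$ and $J_2$ of Definition \ref{d1} live coincides with the target ring $R'$; in view of Remark \ref{sar}(1), the images $\varphi_1(K)$ and $\varphi_2(K)$ may then be taken as the required ideals of $R'$ itself. The whole proof thus reduces to the single line: apply Proposition \ref{rabim} and then Theorem \ref{rmt}.
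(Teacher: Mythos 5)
Your proposal is correct and is exactly the paper's argument: the corollary is stated as an immediate consequence of Proposition \ref{rabim} (which gives splittability) combined with Theorem \ref{rmt}. Your additional remark about $R_\varphi'=R'$ and Remark \ref{sar}(1) is accurate but not needed, since Theorem \ref{rmt} already supplies the ideals required by Definition \ref{d1}.
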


Proposition \ref{rabimr} shows that the surjectivity assumption can be dropped
if $R'$ is reduced.

\begin{corollary}
   \label{racam} 
If the ring $R'$ is reduced, then every 
 Jordan homomorphism $\varphi:R\to R'$ is the sum of a homomorphism and an antihomomorphism on the commutator ideal $K$ of $R$.
\end{corollary}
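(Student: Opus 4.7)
The plan is to observe that this corollary is an immediate combination of two results already established in the paper: Proposition \ref{rabimr} and Theorem \ref{rmt}. Since reducedness of $R'$ is a hypothesis on the codomain (not the image), we do not need surjectivity of $\varphi$ — this is precisely the advantage of the reduced-ring case over the semiprime-ring case handled in Corollary \ref{racac}.

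First, I would invoke Proposition \ref{rabimr}: every Jordan homomorphism from $R$ into a reduced ring $R'$ is splittable, i.e., $V_\varphi \cap W_\varphi = \{0\}$. No extra hypothesis on $R$ or on $\varphi$ is required, so the general Jordan homomorphism $\varphi$ of the corollary's statement is splittable.

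Second, I would feed this into Theorem \ref{rmt}, which converts splittability into the structural conclusion: a splittable Jordan homomorphism is the sum of a homomorphism and an antihomomorphism on the commutator ideal $K$ of $R$. Combining these two facts yields the corollary at once. There is no main obstacle here; the corollary is genuinely a one-line consequence of the preceding theorem and proposition, and its purpose is to record the dropping of the surjectivity hypothesis present in Corollary \ref{racac}, made possible by the stronger structural assumption (reduced, rather than merely $2$-torsion free semiprime) on $R'$.
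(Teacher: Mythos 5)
Your proposal is correct and matches the paper exactly: the corollary is stated without a written proof precisely because it follows immediately by combining Proposition \ref{rabimr} (any Jordan homomorphism into a reduced ring is splittable) with Theorem \ref{rmt}. Your remark that reducedness of the codomain lets one drop the surjectivity hypothesis of Corollary \ref{racac} is also the paper's stated motivation.
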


Recall from
Example \ref{mexa}  that under the assumptions of  either Corollary \ref{racac}  or Corollary \ref{racam},  $\varphi$ is  not necessarily  the sum   of a homomorphism and an antihomomorphism on the whole $R$. Our next goal is to show that that this is true under the additional assumption that $K$ is a direct summand.
This may seem  quite restrictive, but is justified by Example  \ref{mexa0}. 

First a lemma.

\begin{lemma}\label{lsh} Let $\varphi:R\to R'$ be a splittable Jordan homomorphism. If elements $x$ and $y$  from $R$ commute, then
$\varphi(xy)=\varphi(x)\varphi(y)=\varphi(y)\varphi(x)$.  
\end{lemma}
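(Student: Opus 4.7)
The plan is to force both $\{x,y\}$ and $\langle x,y\rangle$ to vanish simultaneously by combining the defining identity of a Jordan homomorphism with the splittability hypothesis. The commutativity assumption $xy=yx$ reduces things to a single linear identity, and splittability then does the rest.

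First I would use $xy=yx$ to rewrite $x\circ y=2xy$, so applying $\varphi$ and invoking \eqref{j2} yields
\[
2\varphi(xy)=\varphi(x\circ y)=\varphi(x)\circ\varphi(y)=\varphi(x)\varphi(y)+\varphi(y)\varphi(x).
\]
In the notation \eqref{nota}, this is precisely
\[
\{x,y\}+\langle x,y\rangle=0,
\]
or equivalently $\{x,y\}=-\langle x,y\rangle$.

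The key step is now to appeal to splittability. By construction, $\{x,y\}\in V_\varphi$ and $\langle x,y\rangle\in W_\varphi$, so the common value $\{x,y\}=-\langle x,y\rangle$ lies in $V_\varphi\cap W_\varphi$. Since $\varphi$ is splittable, this intersection is zero, and we conclude $\{x,y\}=\langle x,y\rangle=0$. Translating back through \eqref{nota}, this is exactly $\varphi(xy)=\varphi(x)\varphi(y)=\varphi(y)\varphi(x)$, as required.

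There is no real obstacle to overcome here: the argument is just a two-line consequence of commutativity plus the definition of splittability. The only thing to be a bit careful about is not to invoke the standard relation \eqref{jh} (which would only give $\{x,y\}^2=0$ and require an additional hypothesis such as $R'$ being reduced); the correct move is to use the sum relation $\{x,y\}+\langle x,y\rangle=0$ together with $V_\varphi\cap W_\varphi=\{0\}$.
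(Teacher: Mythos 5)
Your proof is correct and follows essentially the same route as the paper: both reduce the claim to the identity $\{x,y\}=-\langle x,y\rangle$ and then invoke $V_\varphi\cap W_\varphi=\{0\}$. The only (cosmetic) difference is that the paper obtains this identity directly from $\{x,y\}=\varphi(yx)-\varphi(x)\varphi(y)=\langle y,x\rangle$, whereas you derive it via the Jordan product relation $2\varphi(xy)=\varphi(x)\circ\varphi(y)$; both are valid and require no torsion assumption.
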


\begin{proof}
From $xy=yx$ we see that 
  $\{x,y\}=\langle y,x\rangle = - \langle x,y\rangle.$
  As $\{x,y\}\in V_\varphi$, $\langle x,y\rangle\in W_\varphi$, and  $\varphi$ is splittable,   $\{x,y\}=\langle x,y\rangle=0$ follows.
\end{proof}

\begin{corollary}\label{cmalo}
    Let $\varphi:R\to R'$ be a surjective Jordan homomorphism. If  the commutator ideal $K$ of $R$ is a direct summand and $R'$ is a $2$-torsion free semiprime ring, then 
$\varphi$ is the  direct sum of a homomorphism and an antihomomorphism (from $R$ onto $R'$). \end{corollary}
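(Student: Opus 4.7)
My plan is to obtain the splitting of $\varphi$ on the commutator ideal $K$ via Corollary \ref{racac}, then extend it across a complementary ideal $C$, and finally invoke Lemma \ref{ll} to promote the resulting ``sum'' to a ``direct sum'' from $R$ onto $R'$.

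Write $R = K \oplus C$ with $C$ an ideal of $R$. Since $[R,R] \subseteq K$ and $[R,C] \subseteq C$, we have $[R,C] \subseteq K \cap C = \{0\}$, so $C \subseteq Z(R)$; moreover $KC = CK \subseteq K \cap C = \{0\}$. Corollary \ref{racac} produces a homomorphism $\varphi_1:K \to R'$ and an antihomomorphism $\varphi_2:K \to R'$ satisfying Definition \ref{d1}, with $J_i := \varphi_i(K)$ ideals of $R'$ and $J_1 \cap J_2 = \{0\}$ (Remark \ref{sar}). By Proposition \ref{rabim}, $\varphi$ is splittable; Lemma \ref{lsh} then shows that $\varphi|_C$ is simultaneously a homomorphism and an antihomomorphism and that $\varphi(C) \subseteq Z(R')$. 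Since $kc = 0$ for all $k \in K$, $c \in C$, conditions (c) and (d) of Definition \ref{d1} yield $J_i\varphi(C) = \varphi(C) J_i = \{0\}$ for $i = 1, 2$.

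Semiprimeness of $R'$ now delivers the internal additive direct sum $R' = J_1 \oplus J_2 \oplus \varphi(C)$: indeed, $J_1 \cap J_2 = \{0\}$ is already known, and if $\varphi(c) \in J_1 + J_2$ then $\varphi(c) R' \varphi(c) \subseteq J_1\varphi(C) + J_2\varphi(C) = \{0\}$, forcing $\varphi(c) = 0$. I would then define $\Phi_1, \Phi_2:R \to R'$ by $\Phi_1(k+c) = \varphi_1(k) + \varphi(c)$ and $\Phi_2(k+c) = \varphi_2(k)$. Using $KC = \{0\}$, the centrality of $\varphi(C)$ in $R'$, and the absorption relations above, a routine computation confirms that $\Phi_1$ is a homomorphism, $\Phi_2$ is an antihomomorphism, and $\varphi = \Phi_1 + \Phi_2$ satisfies Definition \ref{d1} on $I = R$ with ideals $\tilde J_1 := J_1 + \varphi(C)$ and $\tilde J_2 := J_2$ (whose trivial intersection is ensured by the three-term decomposition of $R'$). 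Since $\Phi_1(R) \subseteq R' = \varphi(R)$, Lemma \ref{ll} then upgrades this to the desired direct sum decomposition of $\varphi$ from $R$ onto $R'$.

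The crux of the argument is the three-term decomposition $R' = J_1 \oplus J_2 \oplus \varphi(C)$: splitting $J_1$ from $J_2$ is inherited from Theorem \ref{rmt}, while separating $\varphi(C)$ from $J_1 + J_2$ is where the semiprimeness of $R'$ enters, via the standard consequence that $aR'a = \{0\}$ implies $a = 0$. Everything else reduces to the fact that $K$ being a direct summand forces $C$ to be central with $KC = \{0\}$, after which the verifications for $\Phi_1$ and $\Phi_2$ are formal.
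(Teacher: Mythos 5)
Your proposal is correct and follows essentially the same route as the paper: decompose $R=K\oplus C$, obtain $\varphi_1,\varphi_2$ on $K$ from Corollary \ref{racac} and handle the central complement $C$ via Proposition \ref{rabim} and Lemma \ref{lsh}, define $\Phi_1(k+c)=\varphi_1(k)+\varphi(c)$ and $\Phi_2(k+c)=\varphi_2(k)$, and finish with Lemma \ref{ll}. The only (harmless) variation is in verifying condition (b) of Definition \ref{d1}: you build the three-term decomposition $R'=J_1\oplus J_2\oplus\varphi(C)$ from $\varphi(c)R'\varphi(c)=\{0\}$, whereas the paper deduces $\Phi_1(R)\cap\Phi_2(R)=\{0\}$ directly from $\Phi_1(R)\Phi_2(R)=\Phi_2(R)\Phi_1(R)=\{0\}$ and semiprimeness.
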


\begin{proof}
    Our assumption is that there exists
    an ideal $C$ of $R$ such that $R=K\oplus C$. For any $c,c'\in C$,
    $[c,c']\in K\cap C=\{0\}$, so $C$ is commutative.  Lemma \ref{lsh}, together with Proposition \ref{rabim}, tells us  
    that the restriction of $\varphi$ to $C$ is a homomorphism. On the other hand, Corollary \ref{racac} tells us that $\varphi$ is the sum of a homomorphism $\varphi_1$ and an antihomomorphism $\varphi_2$ on $K$.
    Define $\Phi_1,\Phi_2:R\to R'$ by
    $$\Phi_1(k + c)=\varphi_1(k)+\varphi(c)\quad\mbox{and}\quad \Phi_2(k + c)=\varphi_2(k)$$
    for all $k\in K$ and $c\in C$. It is clear that $\varphi=\Phi_1+\Phi_2$ and that  $\Phi_2$ is an antihomomorphism.
    Since $kc=ck=0$ for all $k\in K$ and $c\in C$, it follows from conditions
    (c) and (d) of Definition \ref{d1} that 
$$\varphi_1(k)\varphi(c)= \varphi(c)\varphi_1(k) =0\quad\mbox{and}\quad \varphi_2(k)\varphi(c)= \varphi(c)\varphi_2(k) =0.$$ 
The first equation implies that
  $\Phi_1$ is a homomorphism, and from the second equation we infer that
\begin{equation}\label{dvafi}
    \Phi_1(R)\Phi_2(R)=\Phi_2(R)\Phi_1(R)=\{0\}.
\end{equation}  Consequently, 
$$\Phi_1(xy)=\Phi_1(x)\varphi(y)=\varphi(x)\Phi_1(y)\quad\mbox{and}\quad 
\Phi_2(xy)=\Phi_2(y)\varphi(x)=\varphi(y)\Phi_2(x)
$$ for all $x,y\in R$. 
Hence, $    \Phi_1(R)$ and $\Phi_2(R)$ are ideals of $R'$
(because $\varphi$ is surjective). As $R'$ is semiprime, \eqref{dvafi} implies that     $\Phi_1(R)\cap \Phi_2(R)=\{0\}$. All conditions of Definition \ref{d1} are thus fulfilled, so $\varphi$ is the sum of the homomorphism $\Phi_1$ and the antihomomorphism $\Phi_2$ (on $R$).  By Lemma \ref{ll}, this sum is direct.
\end{proof}

Every von Neumann algebra $R$ satisfies the condition of Corollary \ref{cmalo}, i.e., 
its  commutator ideal is a  direct summand (see, e.g., \cite[Lemma 2.6]{B91}). The following thus holds.

\begin{corollary}\label{cma}
If $R$ is a von Neumann algebra and  $R'$ is a  complex semiprime algebra, then every
surjective Jordan homomorphism
    $\varphi:R\to R'$  is the  direct sum of a homomorphism and an antihomomorphism (from $R$ onto $R'$). \end{corollary}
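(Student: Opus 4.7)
The plan is to reduce this to a direct application of Corollary \ref{cmalo}. Two hypotheses must be verified: that $R'$ is $2$-torsion free and semiprime, and that the commutator ideal $K$ of $R$ is a direct summand.

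The first hypothesis is immediate: since $R'$ is an algebra over $\mathbb{C}$, the scalar $\tfrac{1}{2}$ acts on $R'$, so $2x = 0$ forces $x = 0$; hence $R'$ is $2$-torsion free. Semiprimeness is assumed.

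For the second hypothesis, I would cite the structural fact, already recalled immediately before the statement, that the commutator ideal of a von Neumann algebra is a direct summand (see \cite[Lemma 2.6]{B91}). Concretely, this produces a central projection $e \in R$ such that $K = eR$ and $R = eR \oplus (1-e)R$, so $(1-e)R$ is a complementary ideal.

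With both hypotheses in hand, Corollary \ref{cmalo} applies verbatim and yields that $\varphi$ is the direct sum of a homomorphism and an antihomomorphism from $R$ onto $R'$. There is no real obstacle here; the work has all been done in Corollary \ref{cmalo}, and the role of the present corollary is merely to record that the class of von Neumann algebras supplies a natural and important family of rings to which Corollary \ref{cmalo} applies.
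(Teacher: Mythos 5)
Your proposal is correct and matches the paper's own argument: the paper likewise deduces the corollary by citing \cite[Lemma 2.6]{B91} for the fact that the commutator ideal of a von Neumann algebra is a direct summand and then applying Corollary \ref{cmalo}, with $2$-torsion freeness being automatic for a complex algebra.
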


    Since C$^\ast$-algebras 
are semiprime, Corollary \ref{cma} generalizes Kadison's result \cite[Theorem 10]{K} mentioned in the introduction (and also generalizes \cite[Corollary 5.4]{B2}). We remark that $R'$ does need to be an algebra over the complex field, but  involving  general rings  in the context of operator algebras seems slightly unnatural. What is more important is that, unlike in \cite{K}, we did not require that $\varphi$ preserves adjoints.

Our last corollary in this section is an analog of Corollary \ref{cmalo} for reduced rings.

\begin{corollary}   \label{raca2} 
    Let $\varphi:R\to R'$ be a  Jordan homomorphism. If  the commutator ideal $K$ of $R$ is a direct summand and $R'$ is a reduced ring, then 
$\varphi$ is the   sum of a homomorphism and an antihomomorphism (on the whole $R$). \end{corollary}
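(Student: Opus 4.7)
The plan is to follow the template of Corollary \ref{cmalo}, replacing the ``semiprime $+$ surjective'' argument for property (b) of Definition \ref{d1} with a ``reduced'' argument. First I would invoke the hypothesis to write $R = K \oplus C$ for some ideal $C$ of $R$. Since $K$ and $C$ are ideals with $K \cap C = \{0\}$, the products $KC$ and $CK$ vanish, and in particular $[C,C] \subseteq K \cap C = \{0\}$, so $C$ is commutative. Proposition \ref{rabimr} tells us $\varphi$ is splittable, and Corollary \ref{racam} then supplies a homomorphism $\varphi_1$ and an antihomomorphism $\varphi_2$ on $K$, satisfying all of the structural identities of Definition \ref{d1}, with $\left.\varphi\right|_K = \varphi_1 + \varphi_2$. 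Lemma \ref{lsh} applied to any two elements of $C$ shows that $\left.\varphi\right|_C$ is itself both a homomorphism and an antihomomorphism.

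Next I would define
$$\Phi_1(k+c) = \varphi_1(k) + \varphi(c), \qquad \Phi_2(k+c) = \varphi_2(k) \qquad (k \in K,\ c \in C).$$
Because $kc' = ck' = 0$ whenever $k,k'\in K$ and $c,c'\in C$, a direct expansion of $(k+c)(k'+c') = kk' + cc'$ reduces checking that $\Phi_1$ is a homomorphism and $\Phi_2$ an antihomomorphism to two things: the behavior of $\varphi_1,\varphi_2$ on $K$ (already known), and the vanishing of $\varphi_1(k)\varphi(c')$, $\varphi(c)\varphi_1(k')$ (which follow from conditions (c)-(d) of Definition \ref{d1} applied to $\varphi_1$, $\varphi_2$ together with $kc' = ck' = 0$). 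A similar calculation gives $\varphi = \Phi_1 + \Phi_2$. The same conditions, together with the fact (shown inside the proof of Proposition \ref{rabimr}) that $V_\varphi W_\varphi = W_\varphi V_\varphi = \{0\}$, force
$$\Phi_1(R)\Phi_2(R) = \Phi_2(R)\Phi_1(R) = \{0\},$$
since the only potentially nonzero term in the expansion, $\varphi_1(k)\varphi_2(k')$, lies in $W_\varphi V_\varphi$.

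The main remaining step, and the one I expect to require the most care, is producing the ideals $J_1, J_2$ of $R'_\varphi$ demanded by Definition \ref{d1}(b); unlike in Corollary \ref{cmalo}, we cannot appeal to surjectivity to identify $\Phi_i(R)$ with an ideal of $R'$ automatically. The key observation is that $R'_\varphi$ is generated by $\varphi(R) = \Phi_1(R) + \Phi_2(R)$, and since the mixed products vanish, every monomial in these generators collapses to a product living entirely in $\Phi_1(R)$ or entirely in $\Phi_2(R)$. As $\Phi_1$ is a homomorphism and $\Phi_2$ is an antihomomorphism, such products are absorbed back, so $R'_\varphi = \Phi_1(R) + \Phi_2(R)$ and each summand is an ideal of $R'_\varphi$. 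Taking $J_i = \Phi_i(R)$, any $a \in J_1 \cap J_2$ satisfies $a^2 \in \Phi_1(R)\Phi_2(R) = \{0\}$, and reducedness of $R'$ gives $a = 0$. All conditions of Definition \ref{d1} for $I = R$ are thereby verified, completing the proof.
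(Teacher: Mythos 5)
Your proof is correct and follows the same route as the paper, which itself simply reruns the proof of Corollary \ref{cmalo} with Proposition \ref{rabimr} and Corollary \ref{racam} in place of their semiprime counterparts. The only divergence is in verifying condition (b) of Definition \ref{d1}: the paper takes $J_i$ to be the ideal of $R'_\varphi$ generated by $\Phi_i(R)$ and gets trivial intersection from $\Phi_1(R)R'\Phi_2(R)=\{0\}$, whereas you observe directly that $R'_\varphi=\Phi_1(R)\oplus\Phi_2(R)$ so that the $\Phi_i(R)$ are themselves the required ideals --- both arguments are valid and rest on the same use of reducedness.
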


\begin{proof}
    The proof is almost the same as the proof of Corollary \ref{cmalo}. Only two changes are necessary: at the beginning we refer to Proposition \ref{rabimr} and  Corollary    \ref{racam} instead of Proposition \ref{rabim} and Corollary \ref{racac}, and at the end, when concerned with condition (b) of Definition \ref{d1}, we observe that \eqref{dvafi}  yields $\Phi_1(R)R'\Phi_2(R)=\{0\}$ (because $R'$ is reduced), which further implies that the ideal generated by
    $\Phi_1(R)$ has trivial intersection with the ideal generated by
    $\Phi_2(R)$.
\end{proof}

 We finally remark that the simplest situation in which Corollaries \ref{cmalo} and \ref{raca2} are applicable is when the ring $R$ coincides with its commutator ideal $K$.
    The class of rings with this property  is quite large. In particular, it is closed under direct sums and homomorphic images. Further, if $A$ is an algebra from this class, then  so is $A\otimes B$ for every unital algebra $B$. Another example is a unital ring $R$ having an idempotent $e$ such that $R$ is equal to the ideal generated by $e$ as well as to the ideal generated by $1-e$. Rings with this property appear in the classical study of Jordan homomorphisms \cite{Jac, Mar}.

   \section{The image of the commutator ideal}\label{s5}
   Theorem \ref{rmt} raises the question of what can be said about a
Jordan homomorphism $\varphi:R\to R'$ which  is the sum of a homomorphism and an antihomomorphism on the commutator ideal $K$. 
Does $\varphi$ maps $K$ to the commutator ideal $K'$ of $R'$? 
In  general, the answer is negative. Look again at Example \ref{gras}: the Jordan automorphism $\varphi$ is a homomorphism on the commutator ideal 
$K=Fab$, but does not (necessarily) map $K$ to itself.

In the next theorem, we examine the action of $\varphi$ on some ideals that are contained in 
$K$, in particular on the ideal generated by the Lie ideal
 $[[R,R],R]$.  This Lie ideal  is  intimately connected  with Jordan homomorphisms, which is evident from the equation \eqref{ch} stating that every Jordan homomorphism $\varphi:R\to R'$ satisfies 
$\varphi([[x,y],z])=
     [[\varphi(x),\varphi(y)],\varphi(z)]$ for all $x,y,z\in R$.

\begin{theorem}\label{mt}
Suppose a Jordan homomorphism
$\varphi:R\to R'$ is the sum of  a homomorphism $\varphi_1$ and an antihomomorphism $\varphi_2$ on the commutator ideal $K$
of $R$. Then:
\begin{enumerate}
    \item[1.]  $\varphi_1(K^2)\subseteq K'$ and $\varphi_2(K^2)\subseteq K'$, where $K'$ is the commutator ideal of $R'$.
  \item[2.] $\varphi$ maps the ideal $L$ of $R$ generated by  $[[R,R],R]$ to the ideal  $L'$
  of $R'$ generated by $[[R',R'],R']$.
Moreover, if $\varphi$  is surjective, then $\varphi(L)=L'$.
\end{enumerate}
\end{theorem}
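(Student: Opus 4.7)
My plan is to prove the two items in order.

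For item 1, I first establish the congruence $\varphi_1(k)\equiv\varphi_2(k)\pmod{K'}$ for every $k\in K$. On a pure commutator this is just $\langle y,z\rangle-\{y,z\}=[\varphi(y),\varphi(z)]\in K'$; on $x[y,z]$, the difference $\varphi(x)\langle y,z\rangle-\{y,z\}\varphi(x)$ expands into the sum $[\varphi(x),\varphi(yz)]+[\varphi(y)\varphi(z),\varphi(x)]+\varphi(x)[\varphi(y),\varphi(z)]$, each term visibly in $K'$. Additivity extends the congruence to $K$. Since $J_1,J_2$ are ideals of $R_\varphi'$ with trivial intersection, $J_1J_2=\{0\}$; hence for $k,k'\in K$ the homomorphism property of $\varphi_1$ on $K$ gives
$$\varphi_1(kk')=\varphi_1(k)\varphi_1(k')\equiv\varphi_1(k)\varphi_2(k')\pmod{K'},$$
and $\varphi_1(k)\varphi_2(k')\in J_1J_2=\{0\}$, so $\varphi_1(kk')\in K'$. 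The argument for $\varphi_2(K^2)\subseteq K'$ is symmetric.

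For item 2, Remark \ref{rli} lets us reduce to generators $[x,[y,z]]$ and $w[x,[y,z]]$. Pure double commutators are handled directly by \eqref{ch}: $\varphi([x,[y,z]])=[\varphi(x),[\varphi(y),\varphi(z)]]\in L'$. For $wv$ with $v=[x,[y,z]]$, Definition \ref{d1}(c),(d) gives $\varphi(wv)=\varphi(w)\varphi_1(v)+\varphi_2(v)\varphi(w)$, and applying the formulas from the proof of Theorem \ref{rmt} to $v=[x,u]$ with $u=[y,z]\in K$ produces $\varphi_1(v)=[\varphi(x),\langle y,z\rangle]$ and $\varphi_2(v)=-[\varphi(x),\{y,z\}]$. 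Using $\langle y,z\rangle-\{y,z\}=[\varphi(y),\varphi(z)]$, I would then rewrite
$$\varphi(wv)=\varphi(w)\,[\varphi(x),[\varphi(y),\varphi(z)]]+[\varphi(w),[\varphi(x),\{y,z\}]],$$
the first summand lying in $R'L'\subseteq L'$ and the second being a double commutator in $[R',[R',R']]\subseteq L'$. Hence $\varphi(L)\subseteq L'$.

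For the surjective case, the inclusion $[[R',R'],R']=\varphi([[R,R],R])\subseteq \varphi(L)$ is immediate from \eqref{ch}. The opposite containment $L'\subseteq\varphi(L)$ is the main obstacle, since $\varphi(L)$ is a priori only a Jordan ideal of $R'$. The plan is to show that $\varphi(L)$ is actually an associative ideal of $R'$, forcing it to contain the ideal $L'$ generated by $[[R',R'],R']\subseteq \varphi(L)$. I would exploit Remark \ref{sar}.1 to obtain that $J_1=\varphi_1(K)$ and $J_2=\varphi_2(K)$ are orthogonal ideals of $R'$, and derive from Definition \ref{d1}(c),(d) the identity $\varphi(r)\varphi(l)=\varphi_1(rl)+\varphi_2(lr)$, with each summand lying in the $R'$-ideal $\varphi_i(L)$. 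Decomposing an arbitrary $l'\in L'\subseteq J_1\oplus J_2$ into its $J_i$-components, each necessarily in $\varphi_i(L)$, and combining preimages along $\varphi_1,\varphi_2$ to produce a single preimage in $L$, yields $l'\in\varphi(L)$.
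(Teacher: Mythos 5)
Your forward arguments lean on the explicit formulas $\varphi_1(x[y,z])=\varphi(x)\langle y,z\rangle$ and $\varphi_2(x[y,z])=\{y,z\}\varphi(x)$ from the proof of Theorem \ref{rmt}. But Theorem \ref{mt} is stated for an arbitrary pair $\varphi_1,\varphi_2$ satisfying Definition \ref{d1} on $K$; nothing in the hypotheses tells you that $\varphi_1([y,z])=\langle y,z\rangle$, since the given decomposition need not be the one constructed in Theorem \ref{rmt}. This matters most for item 1, where the congruence $\varphi_1(k)\equiv\varphi_2(k)\pmod{K'}$ is the load-bearing step and you verify it only for that special decomposition. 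The paper's proof avoids the issue entirely: it writes $q=[x,y]z[t,w]$ as $[[x,y],z][t,w]+z[x,y[t,w]]-zy[x,[t,w]]$ and uses only conditions (c), (d) of Definition \ref{d1} (which give, for instance, $\varphi_1([x,u])=[\varphi(x),\varphi_1(u)]\in K'$ for $u\in K$, $x\in R$) together with \eqref{ch}. The forward inclusion in your item 2 is repairable in the same spirit, because there you only need $\varphi_2([y,z])\in R'$ rather than its explicit value; but item 1 as written establishes the claim only for one particular decomposition, not for the ones the theorem quantifies over.

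The more serious gap is the reverse inclusion $L'\subseteq\varphi(L)$ in the surjective case. From $l'=a+b$ with $a\in\varphi_1(L)$ and $b\in\varphi_2(L)$ you cannot conclude $l'\in\varphi(L)$: the preimages $p,q\in L$ with $a=\varphi_1(p)$ and $b=\varphi_2(q)$ are in general different elements, and $\varphi_1(p)+\varphi_2(q)=\varphi(p+q)-\varphi_2(p)-\varphi_1(q)$ leaves you with exactly the terms you cannot control. Producing a single preimage would require something like $L=(L\cap\ker\varphi_1)+(L\cap\ker\varphi_2)$, which is the content of Lemma \ref{ll} and needs its own hypothesis; "combining preimages along $\varphi_1,\varphi_2$" is therefore not a step but the whole difficulty. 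The fix is to keep the two components coupled. For $\ell\in[[R,R],R]$ and $w\in R$, conditions (c) and (d) give $\varphi(w)\varphi(\ell)=\varphi(w\ell)-[\varphi_2(\ell),\varphi(w)]$, and since $\varphi_2(\ell)=[\varphi(z),\varphi_2([x,y])]\in[R',R']$, the commutator term lies in $[[R',R'],R']=\varphi([[R,R],R])\subseteq\varphi(L)$ by surjectivity of $\varphi$ and \eqref{ch}; hence $\varphi(w)\varphi(\ell)=\varphi(w\ell-m)$ for some $m\in[[R,R],R]\subseteq L$. Since every element of $L'$ is a sum of such products $\varphi(w)\varphi(\ell)$ and of elements $\varphi(\ell)$, this yields $L'\subseteq\varphi(L)$ directly, which is exactly how the paper concludes.
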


\begin{proof}
    1. 
Take $x,y,z,t,w\in R$, write
$q=[x,y]z[t,w]$, and observe that 
$$q=[[x,y],z][t,w] +
 z[x,y[t,w]] - zy[x,[t,w]].$$
Using condition (c) of Definition \ref{d1}, we thus have
\begin{align*}
\varphi_1(q) =\varphi([[x,y],z])\varphi_1([t,w])+ \varphi(z)\varphi_1([x,y[t,w]]) -\varphi(zy)\varphi_1([x,[t,w]]).
\end{align*}
Each of the three terms lies in $K'$. For the first term this follows from  \eqref{ch}, for the second term from $\varphi_1([x,y[t,w]])= [\varphi(x),\varphi_1(y[t,w])]$ (by condition (c)), and for the third term from 
 $ \varphi_1([x,[t,w]]) = [\varphi(x), \varphi_1 ([t,w])]$ (also by  condition (c)).  Thus, $\varphi_1(q)\in K'$ (observe that this also holds for $q=[x,y][t,w]$). Therefore,
$$\varphi_1(rqs)=\varphi(r)\varphi_1(q)\varphi(s)\in K'$$ for all $r,s\in R$ (and similarly, $\varphi_1(rq), \varphi_1(qs)\in K'$). This proves 
that $\varphi_1(K^2)\subseteq K'$. The proof that $\varphi_2(K^2)\subseteq K'$ is analogous.

2. Note that \eqref{ch} shows that
\begin{equation}\label{alo}
    \varphi([[R,R],R])\subseteq [[R',R'],R'],
\end{equation}
and if $\varphi$ is surjective, then this inclusion becomes the equality.
Recall from Remark \ref{rli} that
every element in $L$ is a sum of elements of the form $w[[x,y],z]$ and $[[x,y],z]$ (a
 similar observation of course holds for $L'$).
Write $$\ell = [[x,y],z]\quad\mbox{and}\quad
 \ell'=\varphi(\ell)=[[\varphi(x),\varphi(y)],\varphi(z)].$$
 Using conditions of Definition \ref{d1}, we see that for every
$w\in R$,
\begin{align*}
  \varphi(w\ell)=&\varphi_1(w\ell) +\varphi_2(w\ell)  = \varphi(w)\varphi_1(\ell) +\varphi_2(\ell)\varphi(w)\\=&\varphi(w)\ell' + [\varphi_2(\ell),\varphi(w)].
\end{align*}
 Since
$$\varphi_2(\ell) = [\varphi(z),\varphi_2([x,y])]\in [R',R'],$$
it follows that 
\begin{equation}
    \label{well}
\varphi(w\ell)- \varphi(w)\ell' \in [[R',R'],R'].\end{equation} Hence
$\varphi(w\ell)\in L'$, which
 proves that $\varphi(L)\subseteq L'$. Also,
 together with \eqref{ch},
 \eqref{well} shows that 
  $ \varphi(w)\ell'\in \varphi(L)$. Therefore,
$\varphi(L)=L'$
  if $\varphi$ is surjective.
 \end{proof}

 In the next and final corollary, 
 we restrict ourselves to  Jordan automorphisms for simplicity. 
Recall first that an ideal $I$ is called an {\em idempotent ideal} if $I^2=I$, and is called a {\em semiprime ideal} if $J^2\subseteq I$ implies $J\subseteq I$ for every ideal $J$.   

\begin{corollary} Let $R$ be a $2$-torsion free semiprime ring such that one of the following conditions is fulfilled:\begin{enumerate}
\item[{\rm (a)}] the commutator ideal $K$ of $R$ is an idempotent ideal,  \item[{\rm (b)}] $K$ is a semiprime ideal, or 
    \item[{\rm (c)}]  $K$  coincides with the ideal $L$ generated by $[[R,R],R]$. \end{enumerate} Then every Jordan automorphism of $R$   is the direct sum of a homomorphism and an antihomomorphism from $K$ onto $K$. 
\end{corollary}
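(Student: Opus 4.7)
The strategy is to invoke Corollary \ref{racac} to decompose $\left.\varphi\right|_K$ as $\varphi_1+\varphi_2$, to prove $\varphi_i(K)\subseteq K$ for $i=1,2$, and then to apply Lemma \ref{ll}. Corollary \ref{racac} provides the decomposition, with $\varphi_1$ a homomorphism and $\varphi_2$ an antihomomorphism; Remark \ref{sar}(1), applicable since $\varphi$ is surjective, ensures that $\varphi_1(K)$ and $\varphi_2(K)$ are ideals of $R$. Once $\varphi_1(K)\subseteq K$ and $\varphi_2(K)\subseteq K$ are both established, $\varphi(K)\subseteq K$ follows; running the same argument on the Jordan automorphism $\varphi^{-1}$ (which satisfies the identical hypotheses) yields the reverse inclusion, so $\varphi(K)=K$. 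In particular $\varphi_1(K)\subseteq \varphi(K)$, and Lemma \ref{ll} then produces the direct sum of a homomorphism and an antihomomorphism from $K$ onto $K$.

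For cases (a) and (b) I would use Theorem \ref{mt}(1). Since $\varphi_1$ is a homomorphism on $K$, $\varphi_1(K)\cdot\varphi_1(K)=\varphi_1(K^2)\subseteq K$; since $\varphi_2$ is an antihomomorphism, the same identity holds for $\varphi_2$. In case (a), the hypothesis $K=K^2$ gives $\varphi_i(K)\subseteq K$ at once. In case (b), each $\varphi_i(K)$ is an ideal of $R$ with square contained in the semiprime ideal $K$, so $\varphi_i(K)\subseteq K$ by the defining property of a semiprime ideal.

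Case (c) is the main obstacle: neither idempotence nor semiprimeness is available, and one must exploit the Lie-theoretic structure of $L$ directly. For $\ell=[[x,y],z]$, note that $[x,y]\in K$, so condition (c) of Definition \ref{d1} applies twice to yield
\[
\varphi_1(\ell)=\varphi_1([x,y]z)-\varphi_1(z[x,y])=\varphi_1([x,y])\varphi(z)-\varphi(z)\varphi_1([x,y])\in [R,R]\subseteq K,
\]
and then $\varphi_1(w\ell)=\varphi(w)\varphi_1(\ell)\in RK\subseteq K$ for any $w\in R$. By Remark \ref{rli}, every element of $L$ is a sum of such $\ell$'s and $w\ell$'s, so $\varphi_1(L)\subseteq K$, and the hypothesis $L=K$ finishes case (c); condition (d) handles $\varphi_2$ symmetrically. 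The delicate point is that conditions (c), (d) of Definition \ref{d1} require the first argument to lie in $K$, which is why rewriting the triple commutator as a combination whose inner commutator $[x,y]$ appears explicitly in $K$ is the essential maneuver.
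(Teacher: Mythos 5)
Your proposal is correct and follows essentially the same route as the paper: Corollary \ref{racac} for the decomposition on $K$, Theorem \ref{mt}(1) for cases (a) and (b), the identity $\varphi_1([[x,y],z])=[\varphi_1([x,y]),\varphi(z)]$ via Definition \ref{d1}(c) for case (c), and Lemma \ref{ll} to upgrade to a direct sum. The only (harmless) deviation is that in case (c) you obtain $\varphi(K)=K$ by running the $\varphi^{-1}$ argument, whereas the paper invokes Theorem \ref{mt}(2) directly.
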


\begin{proof} 
By Corollary \ref{racac}, $\varphi$ is the sum
of a homomorphism $\varphi_1$ and an antihomomorphism $\varphi_2$ on $K$.

 Since $\varphi_i(K^2)=\varphi_i(K)^2\subseteq K$, each of the conditions (a) and (b) implies that $\varphi_i(K)\subseteq K$ for  $i=1,2$. As $\left.\varphi\right|_{K}=\varphi_1+
\varphi_2$, this gives $\varphi(K)\subseteq K$. Now, $\varphi^{-1}$ is also a Jordan automorphism of $R$, so we analogously have     $\varphi^{-1}(K)\subseteq K$. Therefore, $\varphi(K)=K$,
and the desired conclusion  follows from Lemma \ref{ll}.

Assume that (c) holds.
Then
$\varphi(K)=K$  by the second statement of Theorem \ref{mt}. 
Since $$\varphi_1(w[[x,y],z]) = \varphi(w)[\varphi_1([x,y]),\varphi(z)] \in K,$$
we have $\varphi_1(K)=\varphi_1(L)\subseteq K$.
Invoking Lemma \ref{ll} concludes the proof.  
\end{proof}

We leave as an open question whether the
assumption that one of the conditions (a)-(c) holds is superfluous.

\bigskip

\noindent
{\bf Acknowledgment.} The author is grateful to Efim Zelmanov for the discussion that inspired this work.

\end{document}